\documentclass{amsart}

\usepackage[english]{babel}
\usepackage{csquotes}
\usepackage{textcomp}
\usepackage{varioref}
\usepackage[pdfusetitle]{hyperref}
\usepackage[nobysame,alphabetic,initials,msc-links]{amsrefs}
\DefineSimpleKey{bib}{how}

\renewcommand{\eprint}[1]{#1}
\BibSpec{misc}{%
  +{}{\PrintAuthors}  {author}
  +{,}{ \textit}      {title}
  +{,}{ }             {how}
  +{}{ \parenthesize} {date}
  +{,} { available at \eprint}        {eprint}
  +{,}{ available at \url}{url}
  +{,}{ }             {note}
  +{.}{}              {transition}
}

\usepackage[nameinlink, noabbrev, capitalize]{cleveref}
\usepackage{microtype}

\usepackage{amsfonts}
\usepackage{amssymb}
\usepackage{MnSymbol}
\usepackage{mathtools}
\usepackage{bbm}

\usepackage{graphicx}
\usepackage{multirow}
\usepackage{enumitem}
\usepackage{multicol}
\usepackage{float}
\usepackage{caption}
\usepackage{subcaption}
\usepackage[all]{xy}

\usepackage{amsthm}
\usepackage{thmtools}

\declaretheorem[style=definition,qed=$\heartsuit$,numberwithin=section]{definition}

\declaretheorem[style=definition,qed=$\diamondsuit$,sibling=definition]{example}

\declaretheorem[style=definition,qed=$\diamondsuit$,sibling=definition]{remark}

\theoremstyle{plain}
\newtheorem{theorem}[definition]{Theorem}
\newtheorem{proposition}[definition]{Proposition}
\newtheorem{lemma}[definition]{Lemma}
\newtheorem{corollary}[definition]{Corollary}


\DeclareMathOperator{\ima}{Im}

\DeclareMathOperator{\spn}{span}
\DeclareMathOperator{\Aut}{Aut}

\title{Sheaves of measures and KMS-weights on topological graph algebras}
\author{Jonas Eidesen}
\address{Department of Mathematics, University of Oslo, Norway}
\email{jonaeid@math.uio.no}
\date{October 5th 2023}

\numberwithin{equation}{section}

\pdfoutput = 1

\begin{document}

\begin{abstract}
    We show that the collection of regular Borel measures on a second-countable locally compact Hausdorff space has the structure of a sheaf. With this we give an alternate description of the pullback of a regular Borel measure along a local homeomorphism. We are able to use these tools to give a description of the KMS$_\beta$-weights for the gauge-action on the graph $C^*$-algebra of a second-countable topological graph in terms of sub-invariant measures on the vertex space of said topological graph.
\end{abstract}

\maketitle

\section{Introduction}

Topological graphs give a setting for studying a large family of $C^*$-algebras, among others it generalizes the existing theory of graph algebras and gives a new setting to study homeomorphism $C^*$-algebras. A lot of the theory was first developed by Katsura in \cites{Katsura2003}. Katsura associated a $C^*$-algebra to the quadruple $E=(E^0,E^1,r,s)$ where $E^0$ and $E^1$ are locally compact Hausdorff spaces, $s:E^1\rightarrow E^0$ is a local homeomorphism and $r:E^1\rightarrow E^0$ is continuous. It was shown by Yeend in \cites{Yeend2006, Yeend2007} that the $C^*$-algebras constructed from topological graphs admit a groupoid model. This is done by constructing the boundary path space $\partial E$ and constructing a monoidal action on this space from the natural numbers $\mathbb{N}$ via the \emph{backwards shift map} $\sigma:\partial E \setminus E^0 \rightarrow \partial E$. With this one can consider the Deaconu-Renault groupoid of this action.

Groupoid $C^*$-algebras have been a fruitful class of $C^*$-algebras when studying dynamical systems and KMS$_\beta$-states. Given a groupoid $\mathcal{G}$ and a continuous groupoid homomorphism $c:\mathcal{G}\rightarrow\mathbb{R}$, one is able to construct a $C^*$-dynamical system $(C^*(\mathcal{G}),\mathbb{R},\alpha^c)$ using Pontryagin duality. A lot of work was initiated by Renault in \cites{Renault1980} where he is able to give a description of the KMS$_\beta$-states for $\alpha^c$ in the case where $\mathcal{G}$ is a locally compact Hausdorff étale groupoid. He shows that each KMS$_\beta$-state for $\alpha^c$ restricts to a quasi-invariant probability measure on the unit space $\mathcal{G}^{(0)}$ with Radon-Nikodym derivative $e^{-\beta c}$. When the groupoid is principal he is also able to show that these KMS$_\beta$-states are in fact all the KMS$_\beta$-states for $\alpha^c$.

In \cites{Neshveyev2013} Neshveyev is able to generalize Renault's description of KMS$_\beta$-states for $\alpha^c$ on principal étale groupoids to the non-principal case. Christensen is able to further generalize Neshveyev's results to give a description of KMS$_\beta$-\emph{weights} for $\alpha^c$ in \cites{Christensen2023}. It is worth mentioning that there exist $C^*$-dynamical systems for which there does not exist any KMS$_\beta$-states, but there does exist KMS$_\beta$-weights. Hence, it is worth it to strive for describing not only the KMS$_\beta$-states of a $C^*$-dynamical system, but also the KMS$_\beta$-weights. We give an example of such a situation at the end of this paper, c.f.~\cref{example:no KMS states but KMS weights}.

Schafhauser initiated the study of tracial states on $C^*$-algebras associated to topological graphs in \cites{Schafhauser2018}. The $C^*$-algebra of a topological graph comes equipped with a natural action from the circle group, generally called the \emph{gauge-action}. Schafhauser was in particular interested in the tracial states which are gauge-invariant, and he was able to give a description of these tracial states both in terms of invariant probability measures on the boundary path space $\partial E$ and vertex-invariant probability measures on the vertex space $E^0$. Schafhauser was unable to completely characterize the gauge-invariant tracial states but conjectured that for free topological graphs, every tracial state should be gauge-invariant.

In \cites{Christensen2022} Christensen uses the program developed by Schafhauser along with his results from \cites{Christensen2023} to study tracial \emph{weights} on $C^*$-algebras associated to \emph{second-countable} topological graphs. He is in particular able to give a description of the gauge-invariant tracial weights, and proves Schafhauser's conjecture to be true.

We realized that the techniques developed by Schafhauser in \cites{Schafhauser2018} and Christensen in \cites{Christensen2022} can be used to study KMS$_\beta$-weights for the gauge-action on $C^*$-algebras associated to second-countable topological graphs.

In addition to studying the structure of KMS$_\beta$-weights for the gauge-action on the $C^*$-algebra of a second-countable topological graph, we became interested in the purely measure-theoretic questions that arose in our studies. In particular, there are a lot of results in functional analysis that are of the form "this set of functionals is in bijection with this set of measures". Riesz representation Theorem is the archetypical example of this type of result. In reality, we almost never use these measures explicitly, more often then not we only concern ourselves with the integral over these measures and work with these integrals. There should however be a way to work with these measures directly instead of constantly appealing to the duality between functionals and measures.

The motivating example that led us towards studying this deeper was the idea of pulling back measures. The existing idea has the following setup: let $\varphi:X\rightarrow Y$ be a local homeomorphism between locally compact Hausdorff spaces and $\mu$ be a regular Borel measure on $Y$. Then there exist a unique regular Borel measure $\varphi^*\mu$ on $X$ such that
\begin{equation}\label{eqn:old pullback of measure}
    \int_X f d\varphi^*\mu = \int_Y \sum_{x\in\varphi^{-1}(y)} f(x) d\mu(y)
\end{equation}
for all $f\in C_c(X)$.

It isn't immediately clear from this formula what the measure $\varphi^*\mu$ actually \emph{measures}. Our goal became to obtain a definition of the pullback which preserves the geometric intuition one would hope to have. In particular, since $\varphi$ is a \emph{local} homeomorphism one would expect that $\varphi^*\mu$ would \emph{locally} have the same behavior as $\mu$. This is made precise in \cref{prop:pullback of measure}.

We use this new description of the pullback (of a regular Borel measure) to give a complete description of the KMS$_\beta$-weights for the gauge action of the graph $C^*$-algebra of a second-countable topological graph.

\subsection{Outline}

In \cref{sec:prerequisites} we go through all the necessary prerequisites about sheaf-theory, groupoids and groupoid $C^*$-algebras as well as topological graphs. We particularly sketch the construction of the graph groupoid and the gauge-action on the graph $C^*$-algebra. The fact that the $C^*$-algebra of a topological graph admits a groupoid model (due to Yeend, c.f.~\cites{Yeend2006, Yeend2007}) allows us to use Renault's, Neshveyev's and Christensen's classification of KMS$_\beta$-states and -weights on groupoids, (c.f.~\cites{Renault1980, Neshveyev2013, Christensen2023}) to get a similar classification of KMS$_\beta$-states and -weights for the gauge-action on the $C^*$-algebra of a second-countable topological graph.

In \cref{sec:measures} we introduce the functors $\mathcal{M}$ and $\mathcal{M}_{\text{reg}}$ from the topology on a space $X$ (viewed as a poset-category) to the category of sets, sending an open subset to the collection of all Borel measures and regular Borel measures on that open subset respectively. We show that for arbitrary topological spaces, these functors have the structure of presheaves ($\mathcal{M}_{\text{reg}}$ has the structure of a sub-presheaf of $\mathcal{M}$). For second-countable spaces we show that $\mathcal{M}$ has the structure of a sheaf, and for second-countable locally compact Hausdorff spaces, $\mathcal{M}_{\text{reg}}$ has the structure of a sheaf. We use this to give a definition of the pullback of a regular Borel measure along a local homeomorphism, which coincides with the pullback as defined through \cref{eqn:old pullback of measure}, c.f.~\cref{prop:pullback of measure} and \cref{prop:uniqueness of pullback}.

In \cref{sec:quasi-invariant measures} we translate the results of Neshveyev and Christensen, (c.f.~\cites{Neshveyev2013, Christensen2023}) from the language of groupoid $C^*$-algebras to the language of topological graph $C^*$-algebras. In particular, we introduce \emph{$\beta$-quasi-invariant measures}, which are measures on the boundary path space of a second-countable topological graph that turn out to be the measures that correspond to the KMS$_\beta$-weights for the gauge action on the $C^*$-algebra of a second-countable topological graph, c.f.~\cref{prop:KMS weights graph}.

In \cref{sec:sub-inv measures} we introduce \emph{$\beta$-sub-invariant measures}, which are measures on the vertex space of a second-countable topological graph. We prove that there is an affine bijection between the $\beta$-quasi-invariant measures of a second-countable topological graph and the $\beta$-sub-invariant measures. Since the vertex space of a second-countable topological graph will in general be easier to get a grasp of then the boundary path space of a second-countable topological graph, this bijection allows one to, for example, compute the KMS-spectrum of the graph $C^*$-algebra (in regard to the gauge-action).

\medskip

To keep things more organized in this paper we mark the end of definitions with the symbol $\heartsuit$, and we end remarks and examples with the symbol $\diamondsuit$.

\subsection{Acknowledgments}

The contents of this paper is based on the authors master's thesis, written at the University of Oslo, supervised by Nadia Larsen.

\section{Prerequisites}\label{sec:prerequisites}

\subsection{Sheaves}

Since sheaf theory is not the most standard tool in functional analysis it will be useful to recall the definitions here. See \cites{Hartshorne1977} for a thorough introduction in the setting of algebraic geometry.

Let $X$ be a topological space and denote its topology by $\mathcal{T}$. A \emph{presheaf of sets on $X$} is a contravariant functor $\mathcal{F}:\mathcal{T}\rightarrow\mathbf{Sets}$ where $\mathcal{T}$ is to be understood as a poset-category ordered by inclusion. For $U,V\in\mathcal{T}$ the elements $s\in\mathcal{F}(V)$ are called \emph{sections} and if $U\subset V$ we call the function $\rho^V_U := \mathcal{F}(U\subset V)$ for a \emph{restriction}, and we use the notation $\rho^V_U(s) = s|_U$.

A \emph{sheaf of sets on $X$} is a presheaf of sets on $X$, $\mathcal{F}$, satisfying the two following axioms for any open subset $W\subset X$ and any open cover $\mathcal{U}$ of $W$:
\begin{itemize}
    \item[(\emph{Locality})] If $s,t\in\mathcal{F}(W)$ are such that $s|_U = t|_U$ for all $U\in\mathcal{U}$ we have that $s = t$.
    \item[(\emph{Gluing})] If we have a family of sections $\{s_U\}_{U\in\mathcal{U}}$ such that $s_U \in \mathcal{F}(U)$ and $s_U|_{U\cap V} = s_V|_{U\cap V}$ for every pair $U,V\in\mathcal{U}$, then there exists a section $s\in\mathcal{F}(W)$ such that $s|_U = s_U$ for all $U\in\mathcal{U}$.
\end{itemize}
It is clear that the gluing axiom gives the existence of a global section whenever there are local sections that agree on intersections, and the locality axiom gives uniqueness of this global section. A priori one might also believe that this global section is dependent on the open cover $\mathcal{U}$, it is however easy to show that this is not the case.

If $\mathcal{H}$ is also a (pre)sheaf of sets on $X$ such that $\mathcal{H}(U) \subset \mathcal{F}(U)$ and the diagram
\begin{equation*}
    \xymatrix{
        \mathcal{F}(V) \ar[r] & \mathcal{F}(U) \\
        \mathcal{H}(V) \ar[r] \ar@{^{(}->}[u] & \mathcal{H}(U) \ar@{^{(}->}[u]
    }
\end{equation*}
commutes for each pair of open subsets $U,V\subset X$ such that $U\subset V$, we say that $\mathcal{H}$ is a \emph{sub-(pre)sheaf} of $\mathcal{F}$. Note that the vertical arrows in this diagram are inclusions and the horizontal arrows are the restrictions in $\mathcal{F}$ and $\mathcal{H}$ respectively.

It is worth recalling that a (pre)sheaf can take values in any category, not only $\mathbf{Sets}$. A particularly useful example is when a (pre)sheaf takes values in $\mathbf{Ab}$, the category of abelian groups. In this case, one has access to the tools of homological algebra, thus it is possible to define sheaf-cohomology, which is a powerful tool in many settings.

\subsection{Étale groupoids and their \texorpdfstring{$C^*$} {C*}-algebras}

For a thorough introduction to this topic see the work done by Renault in \cites{Renault1980}. We define a groupoid $\mathcal{G}$ to be a small category where each morphism is invertible. We denote the range map (codomain map) by $r:\mathcal{G}\rightarrow\mathcal{G}$ and the source map (domain map) by $s:\mathcal{G}\rightarrow\mathcal{G}$. The set of objects of $\mathcal{G}$ is denoted by $\mathcal{G}^{(0)}$ and will be referred to as the \emph{unit space}. We will generally use the letters $x,y,z\in\mathcal{G}$ to denote general elements in the groupoid and the letters $u,v,w\in\mathcal{G}^{(0)}$ to denote units. For a unit $u\in\mathcal{G}^{(0)}$ we define $\mathcal{G}_u = s^{-1}(u)$ and $\mathcal{G}^u = r^{-1}(u)$, and denote by $\mathcal{G}_u^u = \mathcal{G}_u \cap \mathcal{G}^u$ the \emph{isotropy subgroup at $u$}. We will only consider groupoids which are second-countable locally compact Hausdorff and étale, where étale means that both the range map and source map are local homeomorphisms. To avoid cluttered terminology these groupoids will simply be referred to as \emph{étale groupoids}. An open subset $W$ of an étale groupoid $\mathcal{G}$ is called a \emph{bisection} if both $r|_W$ and $s|_W$ are injective. An important fact is that every étale groupoid has a countable base of bisections.

We construct a $*$-algebra from an étale groupoid $\mathcal{G}$ by looking at the complex vector space $C_c(\mathcal{G})$ with the convolution product $f * g$ of two functions $f,g\in C_c(\mathcal{G})$ defined by the equation
\begin{equation*}
    (f * g)(x) = \sum_{y\in\mathcal{G}^{r(x)}} f(y)\,g(y^{-1}x)
\end{equation*}
for any $x\in\mathcal{G}$, and involution defined by $f^*(x) = \overline{f(x^{-1})}$. The full $C^*$-norm on $C_c(\mathcal{G})$ is defined by the equation
\begin{equation*}
    \| f \| = \sup\{ \| \pi(f) \| \ | \ \pi \text{ is a } * \text{-representation of } C_c(\mathcal{G}) \}
\end{equation*}
for $f\in C_c(\mathcal{G})$, and the full groupoid $C^*$-algebra is defined as the completion of $C_c(\mathcal{G})$ in this norm, namely
\begin{equation*}
    C^*(\mathcal{G}) = \overline{C_c(\mathcal{G})}^{\|\,\cdot\,\|}.
\end{equation*}

\subsection{Dynamical systems}

For an introduction in $C^*$-dynamical systems see for example \cites{Bratteli1987}. A \emph{$C^*$-dynamical system} is a triple $(\mathcal{A},G,\alpha)$ with $\mathcal{A}$ being a $C^*$-algebra, $G$ a locally compact group and $\alpha$ a strongly continuous map $\alpha:G\rightarrow \Aut(\mathcal{A})$ satisfying $\alpha_e = id_\mathcal{A}$ and $\alpha_g \alpha_h  = \alpha_{gh}$, where $e\in G$ is the identity element and $g,h\in G$. By strongly continuous we mean that the map $g\mapsto \alpha_g(a)$ is norm-continuous for all $a\in\mathcal{A}$.

We will in particular be interested in the case where $G=\mathbb{R}$. In this case it will be possible to analytically continue the dynamics of $\alpha$ from $\mathbb{R}$ to $\mathbb{C}$. Hence, let $(\mathcal{A},\mathbb{R},\alpha)$ be a $C^*$-dynamical system. For $z\in\mathbb{C}$ with $\ima(z)\geq0$ we define the set $S(z)$ to be the horizontal strip $S(z) = \{w\in\mathbb{C} \ | \ \ima(w)\in[0,\ima(z)] \}$. When $\ima(z)\leq0$ we define $S(z)$ similarly: $S(z) = \{w\in\mathbb{C} \ | \ \ima(w)\in[\ima(z),0] \}$.

We define the set $D(\alpha_z)\subset\mathcal{A}$ to be the elements $a\in\mathcal{A}$ such that there exists a continuous function $f:S(z)\rightarrow\mathcal{A}$ which is \emph{analytic} on the interior of $S(z)$ such that $f(t) = \alpha_t(a)$ for all $t\in\mathbb{R}$. We define $\alpha_z(a) = f(z)$ for $z\in S(z)$. Analytic in this context means that the composition $\varphi\circ f$ is holomorphic on the interior of $S(z)$ for all $\varphi\in\mathcal{A}^*$. We say that an element $a\in\mathcal{A}$ is \emph{analytic for $\alpha$} if there exists an entire function $f:\mathbb{C}\rightarrow\mathcal{A}$ such that $f(t) = \alpha_t(a)$ for all $t\in\mathbb{R}$.

With the basic concepts about $C^*$-dynamical systems established we turn our attention towards \emph{weights}, \cites{Pedersen1979} is a nice resource for learning the basics about weights. Let $\mathcal{A}$ be a $C^*$-algebra and denote by $\mathcal{A}_+$ the convex cone of positive elements in $\mathcal{A}$. A \emph{weight} on $\mathcal{A}$ is a map $\psi:\mathcal{A}_+\rightarrow[0,\infty]$ such that for all $a,b\in\mathcal{A}_+$ and $\lambda\geq0$ we have that
\begin{itemize}
    \item $\psi(a + b) = \psi(a) + \psi(b)$ and
    \item $\psi(\lambda a) = \lambda\psi(a)$.
\end{itemize}
Denote by $\mathcal{A}_+^\psi$ the set of all positive elements $a\in\mathcal{A}_+$ for which $\psi(a)<\infty$. Since the positive elements of a $C^*$-algebra span the entire $C^*$-algebra we define $\mathcal{A}^\psi = \spn(\mathcal{A}_+^\psi)$, and we may extend $\psi$ uniquely to a positive functional on $\mathcal{A}^\psi$ in the obvious way. (This is done in detail in ~\cite{Pedersen1979}*{Lemma 5.1.2}).

We call a weight $\psi$
\begin{itemize}
    \item \emph{densely defined} if $\mathcal{A}_+^\psi$ is dense in $\mathcal{A}_+$,
    \item \emph{lower semi-continuous} if $\{a\in\mathcal{A}_+ \ | \ \psi(a)\leq \lambda\} $ is closed for all $\lambda\geq0$, and
    \item \emph{proper} if it is densely defined and lower semi-continuous. \qedhere
\end{itemize}

Given a $C^*$-dynamical system, it is of interest to ask which weights are invariant under this dynamic. The theory of KMS$_\beta$-weights gives an answer to this question and was first introduced by Combes in \cites{Combes1971}. We will however use the following description of KMS$_\beta$-weights, c.f.~\cite{Kustermans1997}*{Definition 2.8}. Let $(\mathcal{A},\mathbb{R},\alpha)$ be a $C^*$-dynamical system and $\beta\in\mathbb{R}$. We call a weight $\psi$ on $\mathcal{A}$ a \emph{KMS$_\beta$-weight for $\alpha$} if it is a proper weight satisfying
\begin{itemize}
    \item $\psi\circ\alpha_t = \psi$ for all $t\in\mathbb{R}$, and
    \item for every $a\in D(\alpha_{-\beta i/2})$ we have that $\psi(a^*a) = \psi(\alpha_{-\beta i/2}(a)\,\alpha_{-\beta i/2}(a)^*)$. 
\end{itemize}
Note that a \emph{KMS$_\beta$-state for $\alpha$} is defined analogously with the additional criteria of being a state.

\subsection{Dynamical systems of groupoid algebras}

Let $\mathcal{G}$ be an étale groupoid, and $c:\mathcal{G}\rightarrow \mathbb{R}$ be a continuous groupoid homomorphism. We get a group homomorphism $\alpha^c:\mathbb{R}\rightarrow\text{Aut}(C^*(\mathcal{G}))$ by extending the following function, defined by
\begin{equation*}\label{eqn:diagonal action real}
    \alpha_t^c(f)(x) = e^{itc(x)}\,f(x),
\end{equation*}
for $t\in\mathbb{R}$, $f\in C_c(\mathcal{G})$ and $x\in\mathcal{G}$. It is also clear that if $z\in\mathbb{C}$ we have that $\alpha_z^c(f)\in C_c(\mathcal{G})$ for all $f\in C_c(\mathcal{G})$, hence every $f\in C_c(\mathcal{G})$ is analytic for $\alpha^c$. When $\mathcal{G}$ is a principal groupoid Renault is able to classify all the KMS$_\beta$-states for $\alpha^c$ in terms of probability measures on the unit space $\mathcal{G}^{(0)}$ that are \emph{quasi-invariant with Radon-Nikodym cocycle $e^{-\beta c}$}, c.f.~\cites{Renault1980}. A regular Borel measure $\mu$ on the unit space $\mathcal{G}^{(0)}$ is \emph{quasi-invariant with Radon-Nikodym cocycle $e^{-\beta c}$} if the pullbacks $r^*\mu$ and $s^*\mu$ are equivalent and $\frac{d(r^*\mu)}{d(s^*\mu)} = e^{-\beta c}$. In \cites{Renault1980} he defines the pullback of a measure using \cref{eqn:old pullback of measure}. However, we want to think of the pullback differently, which we make precise in \cref{sec:measures}. Denote all the quasi-invariant measures with Radon-Nikodym cocycle $e^{-\beta c}$ by $\Delta(e^{-\beta c})$.

For non-principal groupoids, Neshveyev is able to generalize Renault's description, c.f.~\cites{Neshveyev2013}. Combining \cite{Neshveyev2013}*{Theorem 1.3} with his remark that if $\beta \neq 0$ and $\mu \in \Delta(e^{-\beta c})$, then $\mathcal{G}_u^u \subset c^{-1}(0)$ for $\mu$-a.e. $u\in\mathcal{G}^{(0)}$, we get the following result, which we will need in this paper.
\begin{proposition}{\cites{Neshveyev2013}}\label{prop:Neshveyev's description}$ $\newline
    Let $\mathcal{G}$ be an étale groupoid, $c:\mathcal{G}\rightarrow\mathbb{R}$ be a continuous groupoid homomorphism and $\beta\in\mathbb{R}\setminus\{0\}$. If $c^{-1}(0) \cap \mathcal{G}_u^u = \{u\}$ for all $u\in\mathcal{G}^{(0)}$ we have that there is an affine bijection between the set of KMS$_\beta$-states for $\alpha^c$ on $C^*(\mathcal{G})$ and the set of quasi-invariant probability measures on $\mathcal{G}^{(0)}$ with Radon-Nikodym cocycle $e^{-\beta c}$. \qed
\end{proposition}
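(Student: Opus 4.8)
The plan is to obtain this statement as a specialization of Neshveyev's general classification, \cite{Neshveyev2013}*{Theorem 1.3}, combined with the measure-theoretic remark quoted just above it. The point to keep in mind is that Neshveyev's theorem does \emph{not} parametrize KMS$_\beta$-states by measures alone: it establishes an affine bijection between the KMS$_\beta$-states for $\alpha^c$ on $C^*(\mathcal{G})$ and the pairs $(\mu, \{\phi_u\}_u)$ consisting of a quasi-invariant probability measure $\mu\in\Delta(e^{-\beta c})$ together with a $\mu$-measurable field of states $\phi_u$ on the isotropy group algebras $C^*(\mathcal{G}_u^u)$, subject to an equivariance condition under the $\mathcal{G}$-action. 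The first step is therefore simply to invoke this theorem, reducing the problem to understanding which measurable fields $\{\phi_u\}$ can occur under our hypothesis.

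Next I would argue that the hypothesis forces the isotropy to be trivial almost everywhere. Fix $\mu\in\Delta(e^{-\beta c})$. Since $\beta\neq 0$, the quoted remark gives that $\mathcal{G}_u^u\subset c^{-1}(0)$ for $\mu$-a.e.\ $u\in\mathcal{G}^{(0)}$. Intersecting this with the standing hypothesis $c^{-1}(0)\cap\mathcal{G}_u^u=\{u\}$, which holds for \emph{every} $u$, I would conclude that $\mathcal{G}_u^u=\mathcal{G}_u^u\cap c^{-1}(0)=\{u\}$ for $\mu$-a.e.\ $u$; that is, the isotropy group at $u$ is trivial outside a $\mu$-null set.

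With triviality of the isotropy in hand the measurable field collapses: for every $u$ with $\mathcal{G}_u^u=\{u\}$ the algebra $C^*(\mathcal{G}_u^u)$ is just $\mathbb{C}$, on which there is a unique state, so the field $\{\phi_u\}$ is $\mu$-a.e.\ uniquely determined and the datum $(\mu,\{\phi_u\})$ carries exactly the same information as $\mu$ alone. Hence Neshveyev's parametrizing set is in bijection with $\Delta(e^{-\beta c})$ restricted to probability measures, and composing with Neshveyev's bijection yields the claimed bijection between KMS$_\beta$-states and quasi-invariant probability measures. Affineness then follows because Neshveyev's correspondence is affine in the pair and, with the field forced to be the unique state on $\mathbb{C}$, the state attached to $\mu$ is given by integration against $\mu$, so a convex combination $t\mu_1+(1-t)\mu_2$ maps to the convex combination of the associated states. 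I expect the only real obstacle to be bookkeeping rather than mathematics: one must phrase \cite{Neshveyev2013}*{Theorem 1.3} in precisely the form that exhibits the parametrizing object as a measurable field of states on the isotropy algebras, confirm that the equivariance condition becomes vacuous once the field is $\mu$-a.e.\ the unique state on $\mathbb{C}$, and make sure that ``trivial $\mu$-a.e.'' genuinely pins the field down as an element of the relevant space of measurable fields, i.e.\ that $\mu$-null sets are discarded in Neshveyev's equivalence.
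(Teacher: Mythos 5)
Your proposal is correct and follows exactly the route the paper itself takes: the paper states this proposition without proof, deriving it (in the sentence immediately preceding it) by combining \cite{Neshveyev2013}*{Theorem 1.3} with Neshveyev's remark that $\mathcal{G}_u^u\subset c^{-1}(0)$ for $\mu$-a.e.\ $u$ when $\beta\neq 0$, which is precisely your argument that the measurable field of states on the isotropy algebras collapses $\mu$-a.e.\ to the unique state on $\mathbb{C}$. Your write-up simply fills in the bookkeeping the paper leaves implicit.
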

\begin{remark}\label{remark:Christensen's generalization of Neshveyev's thm}$ $\newline
    In \cites{Christensen2023} Christensen is able to further build on Neshveyev's work and attains a similar description, only of KMS$_\beta$-\emph{weights} instead of states. Hence, \cref{prop:Neshveyev's description} remains true in the more general case of KMS$_\beta$-weights if we remove the criteria of the measures being probability measures.
\end{remark}

\subsection{Topological graphs}\label{sec:topological graphs}

A lot of the theory for $C^*$-algebras associated to topological graphs was first developed by Katsura in \cites{Katsura2003}. Katsura associated a $C^*$-algebra to the quadruple $E = (E^0, E^1, r, s)$ where $E^0$ is thought of as the vertex space of the graph, $E^1$ is thought of as the edge space of the graph and $r: E^1 \rightarrow E^0$ and $s: E^1 \rightarrow E^0$ are maps indicating the direction of each edge, so we may think of $E$ as a generalization of directed graphs. To be precise we define a \emph{topological graph} $E$ to be a quadruple $(E^0,E^1,r,s)$ where $E^0$ and $E^1$ are locally compact Hausdorff spaces, $s:E^1\rightarrow E^0$ is a local homeomorphism, called the \emph{source map}, and $r:E^1\rightarrow E^0$ is a continuous map called the \emph{range map}. We call $E^0$ and $E^1$ the \emph{vertex} space and \emph{edge} space respectively. In the case where both $E^0$ and $E^1$ are second-countable we will say that the topological graph $E$ is second-countable.

It was shown by Yeend, c.f.~\cites{Yeend2007, Yeend2006}, that the $C^*$-algebra constructed by Katsura admits a groupoid model. Yeend's description is for higher rank topological graphs, for rank 1 topological graphs there exists another equivalent description due to Kumijan and Li, c.f.~\cites{Kumjian2017}. It is their description that we will be using.

For paths in topological graphs we use the following convention: if we have edges $\{a_1,a_2,\dots,a_n\}\subset E^1$ such that $s(a_k) = r(e_{k+1})$ we get a path of length $n$ by composing: $a = a_1a_2\cdots a_n$, we denote its length by $|a| = n$. One can similarly define infinite paths. We then define the set $E^n$ to be the set of all paths $a$ such that $|a| = n$, we similarly define $E^\infty$ to be the set of all paths of infinite length. We can give $E^n$ the structure of a locally compact Hausdorff space by considering it as a subspace of $\prod_{k=1}^{n} E^1$. We can similarly give $E^\infty$ the structure of a Hausdorff space by considering it as a subspace of $\prod_{k=1}^{\infty}E^1$, note that $E^\infty$ may not be locally compact.

We define the \emph{finite path space} $E^* = \bigsqcup_{n=0}^{\infty} E^n$ and endow it with the disjoint union topology making it a locally compact Hausdorff space. We can extend the range and source maps to all of $E^*$ by defining
\begin{equation*}
    r(a) = r(a_1), \ s(a) = s(a_{|a|})
\end{equation*}
whenever $|a|\geq1$. For vertices $v\in E^0$ we define $r(v) = v = s(v)$. We can similarly extend the range map to $E^\infty$ as well. Note that the source of an infinite path is not well-defined.

We would also like to capture the vertices that have edges pointing towards them, but not too many edges. We call these vertices for \emph{regular}. In the discrete graph case the regular vertices are defined as the vertices $v$ such that the inverse image $r^{-1}(v)$ is non-empty and finite. For a topological graph $E$ we say that a vertex $v\in E^0$ is \emph{regular} if there exists a relatively compact open neighborhood $U\subset E^0$ about $v$ such that the inverse image $r^{-1}(\overline{U})$ is compact and $r(r^{-1}(U)) = U$. We denote the set of regular vertices as $E^0_{\text{reg}}$. We call the vertices that are not regular for \emph{singular} and denote them as $E^0_{\text{sng}} = E^0 \setminus E^0_{\text{reg}}$. Finally, we define a finite path to be \emph{singular} if the source of the path is a singular vertex. We denote the set of singular finite paths as $E^*_{\text{sng}} = E^* \cap s^{-1}(E^{0}_{\text{sng}})$.

\subsection{The boundary path space and graph groupoid}

By \cite{Kumjian2017}*{Proposition 4.6} Yeend's \emph{boundary path space} can be described as the disjoint union
\begin{equation*}
    \partial E = E^*_{\text{sng}} \sqcup E^\infty,
\end{equation*}
where it is endowed with a suitable topology. Before we say what the topology on the boundary path space is it will be useful to introduce some notation.

\begin{definition}\label{def:initial_path_segment}$ $\newline
    Let $E$ be a topological graph and $a\in E^*\sqcup E^\infty$. Write $a=a_1a_2\cdots a_{|a|}$. For $k,n\in\mathbb{N}$ such that $k\leq n\leq|a|$ we will write
    \begin{align*}
        a(n) & = a_1a_2 \cdots a_n, \\
        a(k,n) & = a_ka_{k+1} \cdots a_n, \\
        a(0) & = r(a).
    \end{align*}
    If we want to refer to a specific edge in the path we will simply use the subscript notation $a_n$ to denote the $n$-th edge of $a$.
\end{definition}

\begin{lemma}\label{lemma:cont of (n)}$ $\newline
    Let $E$ be a topological graph and $a\in E^*\sqcup E^\infty$. Then the map $a \mapsto a(n)$ is continuous for every $0 \leq n \leq |a|$.
\end{lemma}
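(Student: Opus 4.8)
The plan is to read off continuity from the way the relevant spaces are topologised: each $E^m$ is a subspace of $\prod_{k=1}^m E^1$, the space $E^\infty$ is a subspace of $\prod_{k=1}^\infty E^1$, and $E^*$ (hence $E^* \sqcup E^\infty$) carries the \emph{disjoint union topology}. For a fixed $n$ the expression $a(n)$ only makes sense on $\{a : |a| \ge n\} = \left(\bigsqcup_{m \ge n} E^m\right) \sqcup E^\infty$, so I shall show that, for each such $n$, this restricted assignment into $E^n$ is continuous; this is how I read the statement, with $n$ a parameter and $a$ the variable.

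First I would treat $n \ge 1$. The assignment $(x_k)_k \mapsto (x_1, \dots, x_n)$ is a coordinate projection $\pi_n$ of the ambient product spaces and is therefore continuous in the product topology. Since the initial segment of a composable tuple is again composable, $\pi_n$ carries $E^m$ (for $m \ge n$) and $E^\infty$ into the composable subspace $E^n$; as $E^n$ carries the subspace topology, the corestricted maps $E^m \to E^n$ and $E^\infty \to E^n$ remain continuous. By the universal property of the disjoint union topology, a map out of $\left(\bigsqcup_{m \ge n} E^m\right) \sqcup E^\infty$ is continuous exactly when each of these restrictions is, which is precisely what the previous sentence establishes.

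It remains to handle $n = 0$, where the target is $E^0$ and the map is $a \mapsto r(a)$. On each $E^m$ with $m \ge 1$ it factors as the first-coordinate projection $\pi_1$ followed by the continuous range map $r : E^1 \to E^0$; on the summand $E^0$ it is the identity; and on $E^\infty$ it again factors through $\pi_1$ and $r$. A final appeal to the coproduct universal property yields continuity.

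I do not anticipate a genuine obstacle. The only points demanding care are confirming that truncation really lands in the composable subspace $E^n$ (so that corestricting against the subspace topology is legitimate) and correctly invoking the universal property of the disjoint union topology for $E^* \sqcup E^\infty$; both are routine once the subspace and coproduct topologies are made explicit.
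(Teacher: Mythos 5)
Your proof is correct and takes essentially the same approach as the paper's: coordinate projections of the ambient product spaces, corestricted to the composable subspaces $E^n$, and glued via the universal properties of the product and disjoint union topologies. The only difference is that you treat the $n=0$ case (where $a(0)=r(a)$ factors through the range map) explicitly, which the paper's proof leaves implicit.
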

\begin{proof}$ $\newline
    If $a \in E^*$, suppose $k \geq n$ and let
    \begin{equation*}
        p^k_n:\prod_{i=1}^{k} E^1 \rightarrow \prod_{i=1}^{n} E^1
    \end{equation*}
    be the projection onto the first $n$ entries. By universality of the product topology, $p^k_n$ is continuous and restricts to a continuous map
    \begin{equation*}
        q^k_n:E^k\rightarrow E^n.
    \end{equation*}
    Then the map $a \mapsto a(n)$ is simply
    \begin{equation*}
        \bigsqcup_{k=n}^{\infty}q^k_n:\bigsqcup_{k=n}^{\infty} E^k \rightarrow E^n,
    \end{equation*}
    which is continuous by universality of the disjoint union topology.

    If $a \in E^\infty$ the map $a \mapsto a(n)$ is simply the restriction of the projection
    \begin{equation*}
        p_n:\prod_{i=1}^{\infty} E^1 \rightarrow \prod_{i=1}^{n} E^1
    \end{equation*}
    to $E^\infty$, which is continuous by universality of the product topology.
\end{proof}

For subsets $S\subset E^*$ we define
\begin{equation*}
    Z(S) = \{ a \in \partial E \ | \ a(n) \in S \text{ for some } n \text{ satisfying } 0 \leq n \leq |a| \}.
\end{equation*}
It is these sets that allows us to endow the boundary path space with a locally compact Hausdorff topology: Let $E$ be a topological graph. The collection
\begin{equation*}\label{eqn:boundary path space base}
    \mathcal{B} = \{ Z(U) \setminus Z(K) \ | \ U \text{ is open in } E^* \text{ and } K \text{ is compact in } E^* \}
\end{equation*}
form a base for a locally compact Hausdorff topology on the boundary path space $\partial E$. See for example \cite{Schafhauser2018}*{Proposition 3.2} for a proof of this fact. We want to extract the following result from Schafhauser's proof, as it will be useful later.

\begin{lemma}{\cites{Schafhauser2018}}\label{lemma:topgraph3}$ $\newline
    Let $E$ be a topological graph and $K\subset E^*$ be compact. Then the set $Z(K)$ is compact in $\partial E$. \qed
\end{lemma}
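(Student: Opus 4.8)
The plan is to reduce to the case where $K$ sits inside a single $E^n$, and then to realise $Z(K)$ as a closed subset of a compact product space built via Tychonoff's theorem. First I would shrink the generality of $K$. Since $E^* = \bigsqcup_{m=0}^\infty E^m$ carries the disjoint union topology, each $E^m$ is clopen, so the compact set $K$ is covered by finitely many of them and $K \subseteq \bigsqcup_{m=0}^N E^m$ for some $N$, with each $K_m := K \cap E^m$ compact. Because $a(j) \in E^j$, the condition $a(j) \in K$ forces $a(j) \in K_j$, so $Z(K) = \bigcup_{m=0}^N Z(K_m)$. As a finite union of compact sets is compact, it suffices to treat $K \subseteq E^n$ for a fixed $n$; in that case $Z(K) = \{a \in \partial E : |a| \ge n \text{ and } a(n) \in K\}$.

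Next I would embed $\partial E$ into a compact space. Let $\widehat{E^0}$ and $\widehat{E^1}$ be the one-point compactifications of $E^0$ and $E^1$, and put $P = \widehat{E^0} \times \prod_{k=1}^\infty \widehat{E^1}$, which is compact Hausdorff by Tychonoff's theorem. Define $\iota:\partial E \to P$ by $\iota(a) = (r(a), a_1, a_2, \dots)$, where the $k$-th edge-coordinate is declared to be the point at infinity whenever $k > |a|$. The maps $a \mapsto a(0) = r(a)$ and $a \mapsto a_k$ (on the paths of length at least $k$) are continuous by \cref{lemma:cont of (n)}, and I would check that, read into the one-point compactifications, they assemble to a continuous injection; injectivity is clear since the range together with the edge sequence recovers the path.

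The core of the argument is then to show that $\iota$ is a homeomorphism onto its image and that $\iota(Z(K))$ is closed in $P$. For $a \in Z(K)$ the first $n+1$ coordinates of $\iota(a)$ lie in the compact image $\widehat K$ of $K$ under $a(n) \mapsto (r(a(n)), a_1, \dots, a_n)$, so $\iota(Z(K)) \subseteq \widehat K \times \prod_{k>n}\widehat{E^1}$, which is a compact subset of $P$. It remains to verify that a limit in $P$ of points $\iota(a^\lambda)$ is again of the form $\iota(a)$ for some $a \in Z(K)$: the limiting coordinate data must satisfy the composability relation $s(\text{$k$-th edge}) = r(\text{$(k+1)$-th edge})$ wherever two genuine edges occur, and, if it terminates after finitely many edges, terminate at a singular vertex. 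Once this is done, $\iota(Z(K))$ is closed in a compact set, hence compact, and $\iota$ being a homeomorphism onto its image transfers compactness back to $Z(K)$.

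I expect the termination condition to be the main obstacle. Closedness of the composability relation follows from continuity of $r$ and $s$ together with Hausdorffness, but the requirement that a \emph{finite} limiting path end at a singular vertex is exactly the delicate point where the definition $\partial E = E^*_{\text{sng}} \sqcup E^\infty$ and the subtraction of the compact sets $Z(K')$ in the base $\mathcal{B}$ come into play. Handling it amounts to ruling out convergence to a finite path based at a regular vertex, which uses the local homeomorphism property of $s$ together with the regularity conditions $r(r^{-1}(U)) = U$ and compactness of $r^{-1}(\overline{U})$. This is precisely the content already present in Schafhauser's verification that $\mathcal{B}$ is a base for a locally compact Hausdorff topology, from which the statement can be extracted.
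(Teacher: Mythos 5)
The paper itself offers no proof of \cref{lemma:topgraph3}; it is extracted verbatim from Schafhauser's proof that $\mathcal{B}$ is a base for a locally compact Hausdorff topology, so the only question is whether your self-contained argument is sound. It is not. Your reduction to $K\subset E^n$ is fine, but the compactification $P=\widehat{E^0}\times\prod_{k=1}^\infty\widehat{E^1}$ does not see the topology of $\partial E$ correctly, and the step you defer --- that a limit in $P$ of points $\iota(a^\lambda)$ with $a^\lambda\in Z(K)$ is again of the form $\iota(a)$ for some $a\in Z(K)$ --- is false. The obstruction is not the termination condition you flag (a finite limit path ending at a regular vertex), but something more basic: a limit in $P$ can have a \emph{gap}, i.e.\ an intermediate edge coordinate equal to the point at infinity followed by coordinates that are genuine edges, and such a point is not $\iota(a)$ for any path $a$ whatsoever.

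Concretely, take the discrete graph with vertices $u,w,x$ and edges $g_i$ ($i\in\mathbb{N}$) with $r(g_i)=u$, $s(g_i)=w$, together with a single edge $h$ with $r(h)=w$, $s(h)=x$. Then $w$ is regular while $u$ and $x$ are singular, so $\partial E=\{u,x,h\}\cup\{g_ih\}_{i\in\mathbb{N}}$. For $K=\{u\}\subset E^0$ one has $Z(K)=\{u\}\cup\{g_ih\}_{i\in\mathbb{N}}$, and $g_ih\to u$ in $\partial E$, since a basic neighborhood $Z(U)\setminus Z(C)$ of $u$ has $C$ finite and therefore excludes only finitely many $g_ih$. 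However $\iota(g_ih)=(u,g_i,h,\infty,\dots)$ converges in $P$ to $(u,\infty,h,\infty,\dots)$: the first edge coordinate escapes to infinity while the second is constantly $h$. This limit lies outside $\iota(\partial E)$, so $\iota(Z(K))$ is not closed in $P$; in fact $\iota(Z(K))$ is an infinite discrete subspace of $P$ (the points $\iota(g_ih)$ do not accumulate at $\iota(u)=(u,\infty,\infty,\dots)$ because their second coordinates sit in the clopen set $\{h\}$), hence not compact, and $\iota$ is not continuous, hence not a homeomorphism onto its image. The underlying phenomenon is that convergence in $\partial E$ \emph{truncates}: a net of long paths may converge to a strictly shorter path, discarding tails that do not themselves escape to infinity, and a coordinatewise embedding cannot record this. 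The repair is to replace your product by the inverse limit $\lim(\partial E_n,\rho_n)$ of \cref{lemma:limit}, whose bonding maps truncate by design; this is essentially the route Schafhauser takes, and there your reduction to $K\subset E^n$ and the closedness of the composability relation do carry over. As written, though, the argument fails for any graph containing an infinite receiver whose incoming edges extend to longer paths.
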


It is clear that if $E$ is a second-countable topological graph, then the boundary path space with this topology is also second-countable.

To construct a groupoid from the boundary path space we need an action from the natural numbers, including zero. This is done via the \emph{backwards shift map} defined as follows. Let $E$ be a topological graph and $a\in\partial E \setminus E^0$. We define a map $\sigma: \partial E \setminus E^0 \rightarrow \partial E$ by
\begin{align*}
    \sigma(a) & = a(2,|a|), \text{ if } |a| \geq 2 \\
    \sigma(a) & = s(a), \text{ if } |a| = 1.
\end{align*}
By \cite{Schafhauser2018}*{Proposition 3.5}, the backwards shift map is a local homeomorphism.

The graph groupoid is then defined as
\begin{equation*}
    \mathcal{G}_E = \{ (a, k - l, b) \in \partial E \times \mathbb{Z} \times \partial E \ | \ k,l \geq 0, \ k \leq |a|, \ l \leq |b|, \ \sigma^k(a) = \sigma^l(b) \}.
\end{equation*}
For open subsets $U, V \subset \partial E$ and non-negative integers $k,l\in\mathbb{N}_0$ we define the set $Z(U,k,l,V)$ to be
\begin{equation*}
    Z(U,k,l,V) = \{ (a, k - l, b) \in \mathcal{G}_E \ | \ a \in U, \ b \in V \}.
\end{equation*}
It is well known that the collection of these sets form a base for a locally compact Hausdorff étale topology on $\mathcal{G}_E$ since, you might also recognize $\mathcal{G}_E$ as the Deaconu-Renault groupoid associated to the action $\mathbb{N}_0\times\partial E \rightarrow \partial E$ sending $(k,a)\mapsto \sigma^k(a)$. It is clear that if $E$ is a second-countable topological graph then $\mathcal{G}_E$ becomes second-countable as well (hence, by our naming convention, $\mathcal{G}_E$ is an étale groupoid whenever $E$ is a second-countable topological graph), hence we may form the associated groupoid $C^*$-algebra, $C^*(E) := C^*(\mathcal{G}_E)$.

It should also be clear that the unit space of $\mathcal{G}_E$ is homeomorphic to the boundary path space $\partial E$, hence we are ultimately interested in studying measures on $\partial E$.

\subsection{The gauge action}

We get an action from the circle group on the $C^*$-algebra of a second-countable topological graph $E$ by dualizing the map $\Phi : \mathcal{G}_E \rightarrow \mathbb{Z}$ defined by $\Phi(a,n,b) = n$. Explicitly, for $z\in\mathbb{T}$ we get a continuous *-homomorphism $\gamma_z : C_c(\mathcal{G}_E) \rightarrow C_c(\mathcal{G}_E)$ defined by
\begin{equation*}
    \gamma_z(f)(a,n,b) = z^n\,f(a,n,b)
\end{equation*}
for $f\in C_c(\mathcal{G}_E)$ and $(a,n,b)\in\mathcal{G}_E$. By continuity this extends to give us a unique group homomorphism $\gamma: \mathbb{T} \rightarrow \text{Aut}(C^*(E))$ defined by $z \mapsto \gamma_z$. We get a $C^*$-dynamical system, $(C^*(E),\mathbb{R},\alpha^\Phi)$, where $\alpha^\Phi$ is defined by precomposing $\gamma$ with the exponential map, i.e. $\alpha^\Phi_t = \gamma_{e^{it}}$.

\section{Sheaves of measures}\label{sec:measures}

Instead of trying to immediately describe the pullback of a \emph{regular} Borel measure let us take a step back and only consider Borel measures. Let $f:X\rightarrow Y$ be a local homeomorphism between topological spaces and $\mu$ be a Borel measure on $Y$. If $U$ is an open subset of $X$ such that $f|_U$ is injective we have that $U\cong f(U)$, and we want the Borel subsets of $U$ to have the same measure as their image in $f(U)$. So we define the pullback $f|_U^*\mu$ of $\mu$ on $U$ by setting $f|_U^*\mu(B) = \mu(f(B))$ for each Borel subset $B$ in $U$. If we let 
\begin{equation*}
    \mathcal{U} = \{ U \subset X \ | \ U \text{ is open in } X \text{ and } f|_U \text{ is injective} \}
\end{equation*}
we get a family of Borel measures $\{f|_U^*\mu\}_{U\in\mathcal{U}}$. We would like to be able to glue these measures to obtain a global measure on $X$, and we would like the resulting measure to be independent of our choice of open cover of $X$.

Sheaf-theory is precisely the tool that axiomatizes such problems and gives us a clear road map for things we need to prove in order to define the pullback of a measure. To our knowledge using sheaf-theory to study measures in this context is all original work.

\begin{definition}\label{def:presheaf of measures}$ $\newline
    Let $X$ be a topological space. For each open subset $U\subset X$, let
    \begin{equation*}
        \mathcal{M}(U) = \{ \mu:\mathcal{B}(U)\rightarrow[0,\infty] \ | \ \mu \text{ is a Borel measure} \},
    \end{equation*}
where $\mathcal{B}(U)$ denotes the Borel $\sigma$-algebra of $U$. For inclusion of open subsets $V\subset U$ in $X$, we define the restriction map $\rho^U_V:\mathcal{M}(U) \rightarrow \mathcal{M}(V)$ by $\mu|_V(B) = \mu(B)$ where $\mu\in\mathcal{M}(U)$ and $B\subset V$ is a Borel subset. This is well-defined since any Borel subset in $V$ is automatically a Borel subset in $U$, since $V$ is open in $U$. We denote the collection of this data, namely the sets $\mathcal{M}(U)$ and the restriction maps $\rho^U_V:\mathcal{M}(U)\rightarrow\mathcal{M}(V)$, by $\mathcal{M}$.
\end{definition}

\begin{lemma}\label{lemma:presheaf of measures}$ $\newline
    Let $X$ be a topological space, then $\mathcal{M}$ is a presheaf of sets on $X$.
\end{lemma}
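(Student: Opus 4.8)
The plan is to verify that $\mathcal{M}$, together with the data specified in \cref{def:presheaf of measures}, is a contravariant functor $\mathcal{T}\rightarrow\mathbf{Sets}$, since by definition a presheaf of sets on $X$ is exactly such a functor. Concretely there are three points to establish: that each restriction map $\rho^U_V$ genuinely takes values in $\mathcal{M}(V)$, that $\rho^U_U$ is the identity on $\mathcal{M}(U)$, and that restrictions compose correctly, i.e.\ $\rho^V_W\circ\rho^U_V = \rho^U_W$ whenever $W\subset V\subset U$. The contravariance is then automatic, since the direction of the restriction maps already reverses the inclusions: the inclusion $V\subset U$, regarded as a morphism in the poset-category $\mathcal{T}$, is sent to a map $\mathcal{M}(U)\rightarrow\mathcal{M}(V)$.

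First I would confirm that $\rho^U_V(\mu)=\mu|_V$ really lands in $\mathcal{M}(V)$. Because $V$ is open in $U$, every open subset of $V$ (in the subspace topology) is open in $U$, so the $\sigma$-algebra they generate satisfies $\mathcal{B}(V)\subset\mathcal{B}(U)$; hence $\mu|_V$ is defined on all of $\mathcal{B}(V)$ simply as the restriction of $\mu$. It is a Borel measure on $V$ since $\mu|_V(\emptyset)=\mu(\emptyset)=0$, and countable additivity over pairwise disjoint Borel subsets of $V$ is inherited verbatim from the countable additivity of $\mu$, those subsets being in particular pairwise disjoint Borel subsets of $U$. The identity axiom is immediate: for $\mu\in\mathcal{M}(U)$ and any Borel $B\subset U$ one has $\rho^U_U(\mu)(B)=\mu(B)$, so $\rho^U_U=\mathrm{id}_{\mathcal{M}(U)}$. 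For functoriality, given $W\subset V\subset U$, a measure $\mu\in\mathcal{M}(U)$, and a Borel subset $B\subset W$, I would compute $(\rho^V_W\circ\rho^U_V)(\mu)(B)=(\mu|_V)|_W(B)=\mu(B)=\rho^U_W(\mu)(B)$, using only that each restriction re-reads the same set function on a smaller $\sigma$-algebra.

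There is no genuine obstacle here; the lemma is a routine verification. The single point that deserves explicit mention is the containment $\mathcal{B}(V)\subset\mathcal{B}(U)$, which is precisely where openness of $V$ in $U$ is used and which makes the restriction well-defined without passing to traces of Borel sets. Everything else follows directly from the additivity axioms of $\mu$ and from the fact that restriction leaves the measure values unchanged.
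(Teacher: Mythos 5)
Your proposal is correct and follows essentially the same route as the paper: verify that restriction is well-defined (which the paper folds into \cref{def:presheaf of measures} via the observation $\mathcal{B}(V)\subset\mathcal{B}(U)$ for $V$ open in $U$), then check the identity and composition axioms by the same one-line computation $(\mu|_V)|_W(B)=\mu(B)=\mu|_W(B)$. The extra detail you give on countable additivity of $\mu|_V$ is harmless and matches the paper's intent.
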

\begin{proof}$ $\newline
    Fix an open subset $U\subset X$ and let $\mu\in\mathcal{M}(U)$. Clearly $\mu|_U=\mu$, so $\rho^U_U = id_{\mathcal{M}(U)}$. If we have inclusions of open subsets $W\subset V\subset U$ in $X$, we have for all Borel subsets $B\subset W$ that
    \begin{equation*}
        (\mu|_V)|_W(B) = \mu|_V(B) = \mu(B) = \mu|_W(B).
    \end{equation*}
    Hence, $\rho^V_W\circ\rho^U_V = \rho^U_W$. This completes the proof showing that $\mathcal{M}$ is a contravariant functor from the topology of any topological space to the category of sets.
\end{proof}

For general topological spaces $X$ we won't get that $\mathcal{M}$ is a sheaf. To prove the locality axiom we require that any open cover $\mathcal{U}$ of $X$ has a countable subcover, which isn't guaranteed. The need for this comes from the additivity axiom for measures. We do however get a sheaf for second-countable spaces.

\begin{proposition}\label{prop:sheaf of measures}$ $\newline
    Let $X$ be a second-countable space, then $\mathcal{M}$ is a sheaf of sets on $X$.
\end{proposition}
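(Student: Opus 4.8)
The plan is to verify the two sheaf axioms for a fixed open set $W\subset X$ and a fixed open cover $\mathcal{U}$ of $W$, using second-countability only through the resulting Lindelöf property: every open cover of $X$, hence of $W$, admits a countable subcover $\{U_n\}_{n\in\mathbb{N}}\subset\mathcal{U}$. The device that reconciles such a countable cover with the countable additivity of measures is \emph{disjointification}: I would set $V_n = U_n\setminus\bigcup_{m<n}U_m$. Each $V_n$ is a Borel subset of $W$ (a difference of open sets), the $V_n$ are pairwise disjoint, and $\bigcup_n V_n = \bigcup_n U_n = W$ since $\{U_n\}$ already covers $W$. Consequently, for any Borel set $B\subset W$ one has the disjoint decomposition $B = \bigsqcup_n (B\cap V_n)$, with each $B\cap V_n$ a Borel subset of $U_n$.

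For locality, suppose $s,t\in\mathcal{M}(W)$ satisfy $s|_U = t|_U$ for all $U\in\mathcal{U}$; in particular this holds for each $U_n$. Given a Borel $B\subset W$, countable additivity gives $s(B) = \sum_n s(B\cap V_n)$ and $t(B)=\sum_n t(B\cap V_n)$. Since $B\cap V_n\subset U_n$, the definition of the restriction maps yields $s(B\cap V_n) = s|_{U_n}(B\cap V_n) = t|_{U_n}(B\cap V_n) = t(B\cap V_n)$, so the two series agree term by term and $s = t$.

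For gluing, I would start from a compatible family $\{s_U\}_{U\in\mathcal{U}}$ and define a candidate $s:\mathcal{B}(W)\to[0,\infty]$ by $s(B) = \sum_n s_{U_n}(B\cap V_n)$, which makes sense because $B\cap V_n$ is Borel in $U_n$. First I would check that $s$ is a Borel measure: $s(\emptyset)=0$ is immediate, and for a disjoint Borel family $\{B_k\}$ the countable additivity of each $s_{U_n}$ together with Tonelli's theorem for nonnegative double series justifies interchanging the two sums, giving $s(\bigsqcup_k B_k) = \sum_k s(B_k)$. The crucial step is then to show $s|_U = s_U$ for \emph{every} $U\in\mathcal{U}$, not merely those in the subcover. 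Fixing such a $U$ and a Borel $B\subset U$, each $B\cap V_n$ lies in $U\cap U_n$, so the compatibility hypothesis $s_{U_n}|_{U\cap U_n} = s_U|_{U\cap U_n}$ upgrades the defining sum to $s(B)=\sum_n s_U(B\cap V_n)$; since $B=\bigsqcup_n(B\cap V_n)$, countable additivity of $s_U$ collapses this to $s(B)=s_U(B)$.

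The main obstacle is not any single computation but the organizational point that the glued measure must be tested against the full (possibly uncountable) cover $\mathcal{U}$ while being assembled from only a countable subcover; this is exactly where the compatibility condition on $\{s_U\}$ becomes indispensable, and where second-countability via Lindelöf is what makes the additivity manipulations legitimate. Uniqueness of the glued section then follows automatically from the locality axiom established above, so no separate argument for independence of the chosen subcover or ordering is required.
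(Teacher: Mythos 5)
Your proposal is correct and follows essentially the same route as the paper: a countable subcover obtained from second-countability, the disjointification $V_n = U_n\setminus\bigcup_{m<n}U_m$, term-by-term comparison for locality, and the glued measure $s(B)=\sum_n s_{U_n}(B\cap V_n)$ with Tonelli justifying countable additivity and the compatibility condition handling arbitrary $U\in\mathcal{U}$. The only cosmetic difference is that the paper first reduces to the case $W=X$ by observing that every subspace of a second-countable space is second-countable, whereas you invoke the Lindel\"of property of $W$ directly.
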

\begin{proof}$ $\newline
    Note first that second-countability of $X$ implies that any subspace of $X$ is also second-countable. Hence, it suffices to prove that locality and gluing holds for $X$ to conclude that they hold for any open subset of $X$.

    Let $\mathcal{U}$ be an open cover of $X$. Since $X$ is second-countable we get a countable sub-cover $\mathcal{V}$ of $\mathcal{U}$. Let $\mathcal{V} = \{V_i\}_{i\in\mathbb{N}}$ be an indexing of $\mathcal{V}$.

    For subsets $S\subset X$ we introduce the notation $S(i) = S\cap(V_i\setminus(V_1\cup...\cup V_{i-1}))$. Clearly $S = \bigcup_{i\in\mathbb{N}}S(i)$ with $S(i) \cap S(j) = \emptyset$ for $i\neq j$ and each $S(i) \subset V_i$.
    
    \noindent
    \emph{Locality:} \\
    Suppose we have Borel measures $\mu,\nu\in\mathcal{M}(X)$ such that $\mu|_U = \nu|_U$ for all $U\in\mathcal{U}$. Then for any Borel subset $B\subset X$ we have that
    \begin{equation*}
        \mu(B) 
        = \mu\left(\bigcup_{i\in\mathbb{N}}B(i)\right)
        = \sum_{i\in\mathbb{N}}\mu(B(i))
        = \sum_{i\in\mathbb{N}}\mu|_{V_i}(B(i))
        = \sum_{i\in\mathbb{N}}\nu|_{V_i}(B(i))
        = \nu(B).
    \end{equation*}
    \noindent
    \emph{Gluing:} \\
    Suppose we have a Borel measures $\mu_U\in\mathcal{M}(U)$ for each $U\in\mathcal{U}$ such that $\mu_U|_{U\cap V} = \mu_V|_{U\cap V}$ for all $U,V \in \mathcal{U}$. Define $\mu_i = \mu_{V_i}$. We define a global Borel measure $\mu\in\mathcal{M}(X)$ by
    \begin{equation*}
        \mu(B) = \sum_{i\in\mathbb{N}}\mu_i(B(i)),
    \end{equation*}
    where $B\subset X$ is a Borel subset. The only non-trivial thing we need to check to ensure that $\mu$ is a Borel measure is additivity:
    \begin{align*}
        \mu\left( \bigcup_{j\in\mathbb{N}}B_j \right)
        & = \sum_{i\in\mathbb{N}}\mu_i\left( \bigcup_{j\in\mathbb{N}}B_j(i) \right)
        = \sum_{i\in\mathbb{N}}\sum_{j\in\mathbb{N}}\mu_i(B_j(i))
        = \sum_{j\in\mathbb{N}}\sum_{i\in\mathbb{N}}\mu_i(B_j(i))
        = \sum_{j\in\mathbb{N}}\mu(B_j),
    \end{align*}
    where $\{B_j\}_{j\in\mathbb{N}}$ is a collection of mutually disjoint Borel subsets in $X$. Note that we can interchange the sums in the last line because we are summing positive numbers.
    
    Now let $U\in\mathcal{U}$ and $B\subset U$ be a Borel subset. To see that $\mu|_U = \mu_U$ is a matter of unraveling definitions and noting that each $B(i)\subset U\cap V_i$, hence $\mu_i|_{U\cap V_i}(B(i)) = \mu_U|_{U\cap V_i}(B(i))$.
\end{proof}

Note that the sets $\mathcal{M}(U)$, \emph{almost} has the structure of a real vector space, since we can add measures and scale them by non-negative real numbers. One could consider the group completion of $\mathcal{M}(U)$ by adding formal inverses to different measures, i.e. by adding \emph{virtual measures}. If we do this we get that $\mathcal{M}$ takes values in $\mathbf{Ab}$, so for second-countable spaces, $\mathcal{M}$ would be a sheaf of abelian groups (in fact a sheaf of real vector spaces). We do not explore this further in this paper, but it is worth noting.

We now give an example of a (not second-countable) space for which locality does not hold.

\begin{example}\label{example:counter example sheaf}$ $\newline
    Consider $\mathbb{R}$ with the discrete topology. Let $\mathcal{U} = \{\{x\}\}_{x\in\mathbb{R}}$ be an open covering of $\mathbb{R}$. Let $\mu$ be the measure defined by
    \begin{equation*}
        \mu(B) = \left\{
        \begin{matrix*}[l]
            0 & \text{if } B \text{ is countable or finite},\\
            \infty & \text{if } B \text{ is uncountable}.
        \end{matrix*}
        \right.
    \end{equation*}
    This clearly defines a measure. It is clear that $\mu$ agrees with the zero measure on all restrictions to $\{x\}\in\mathcal{U}$, however $\mu$ is not the zero measure, so locality doesn't hold.
\end{example}

We are however mostly interested in \emph{regular} Borel measures. Some authors give different definitions of a regular measure, we will say that a Borel measure is regular if it is both inner- and outer-regular \emph{and} finite on compact sets. Recall that for locally compact Hausdorff spaces we have that a Borel measure is regular if and only if it is finite on compact sets.

We now want to find the requirements on a topological space for which $\mathcal{M}_{\text{reg}}$, the family of \emph{regular} Borel measures, becomes a sheaf of sets.

\begin{lemma}\label{lemma:presheaf of regular measures}$ $\newline
    Let $X$ be a topological space, then $\mathcal{M}_{\text{reg}}$ is a sub-presheaf of $\mathcal{M}$ on $X$.
\end{lemma}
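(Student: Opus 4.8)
The plan is to build directly on the presheaf structure from \cref{lemma:presheaf of measures}: since the restriction map of $\mathcal{M}_{\text{reg}}$ will be the co-restriction of $\rho^U_V$, and the vertical maps in the sub-presheaf diagram are inclusions, the required square commutes tautologically. Consequently the entire content of the lemma reduces to a single claim: whenever $V \subset U$ are open in $X$ and $\mu \in \mathcal{M}_{\text{reg}}(U)$, the restriction $\mu|_V$ lies in $\mathcal{M}_{\text{reg}}(V)$; that is, restriction preserves regularity.

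To establish this I would rely on two elementary observations about the subspace $V$. First, because $V$ is open in $U$, a subset of $V$ is open in $V$ exactly when it is open in $U$. Second, compactness is intrinsic to a subspace, so a subset of $V$ is compact in $V$ exactly when it is compact in $U$. With these in hand the first two regularity conditions are immediate. For finiteness on compact sets, any $K \subset V$ that is compact in $V$ is compact in $U$, whence $\mu|_V(K) = \mu(K) < \infty$. For inner regularity, note that for a Borel set $B \subset V$ the compact subsets of $B$ form the same family whether viewed in $V$ or in $U$ (each is automatically contained in $V$), so inner regularity of $\mu$ gives $\mu|_V(B) = \mu(B) = \sup\{\mu(K) : K \subset B \text{ compact}\}$, which is exactly inner regularity of $\mu|_V$.

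The only step requiring genuine care is outer regularity, since an open set in $U$ containing $B$ need not be contained in $V$. Given a Borel set $B \subset V$ and any open $O \subset U$ with $B \subset O$, one instead passes to $O \cap V$, which is open in $V$, still contains $B$, and satisfies $\mu|_V(O \cap V) = \mu(O \cap V) \leq \mu(O)$. Taking the infimum over all such $O$ and invoking outer regularity of $\mu$ yields $\inf\{\mu|_V(W) : B \subset W,\ W \text{ open in } V\} \leq \mu(B)$; the reverse inequality holds trivially, since every open $W \subset V$ containing $B$ satisfies $\mu(B) \leq \mu(W)$. Hence $\mu|_V$ is outer regular.

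I expect this outer-regularity step to be the main (and essentially only) obstacle, precisely because the witnessing open sets for the infimum must be intersected with $V$; everything else follows formally from the subspace observations together with the already-established presheaf axioms. Once $\mu|_V$ is known to be regular, the sub-presheaf axioms hold and the proof is complete.
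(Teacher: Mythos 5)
Your proposal is correct and follows the same route as the paper: reduce the lemma to the claim that restriction to an open subset preserves regularity, the commutativity of the sub-presheaf square being automatic. The paper simply declares this regularity check ``standard procedure'' and omits it, whereas you supply the details (correctly handling the one non-trivial point, outer regularity, by intersecting the witnessing open sets with $V$).
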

\begin{proof}$ $\newline
    We clearly have that $\mathcal{M}_{\text{reg}}(U) \subset \mathcal{M}(U)$ for all open subset $U \subset X$. We want to define the restriction of $\mathcal{M}_{\text{reg}}$ to be the restriction of $\mathcal{M}$, i.e. that it is given by the composition
    \begin{equation*}
        \xymatrix{
            \mathcal{M}_{\text{reg}}(V) \ar@{^{(}->}[r] & \mathcal{M}(V) \ar[r] & \mathcal{M}(U)
        }
    \end{equation*}
    for inclusions of open subsets $U \subset V \subset X$. We want to check that this composition factors through $\mathcal{M}_{\text{reg}}(U)$, then we immediately get that $\mathcal{M}_{\text{reg}}$ is a sub-presheaf of $\mathcal{M}$ on $X$. To do this we only need to check that the restriction of a regular Borel measure (viewed as a section of $\mathcal{M}(V)$) is again a regular Borel measure. Doing this is standard procedure, so we omit the details here.
\end{proof}

\begin{proposition}\label{prop:sheaf of regular measures}$ $\newline
    Let $X$ be a second-countable locally compact Hausdorff space. Then $\mathcal{M}_{\text{reg}}$ is a sub-sheaf of $\mathcal{M}$ on $X$.
\end{proposition}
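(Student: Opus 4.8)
The plan is to leverage the two results already in hand: $\mathcal{M}$ is a sheaf on $X$ by \cref{prop:sheaf of measures} (as $X$ is second-countable), and $\mathcal{M}_{\text{reg}}$ is a sub-presheaf of $\mathcal{M}$ by \cref{lemma:presheaf of regular measures}. To upgrade this sub-presheaf to a sub-sheaf it suffices to verify the locality and gluing axioms for $\mathcal{M}_{\text{reg}}$ on an arbitrary open $W\subset X$. I keep in mind throughout that $W$ is again second-countable locally compact Hausdorff, so the recalled equivalence \enquote{a Borel measure is regular if and only if it is finite on compact sets} is available on $W$.

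Locality is immediate and inherited from $\mathcal{M}$: if $\mu,\nu\in\mathcal{M}_{\text{reg}}(W)$ agree on every member of a cover, then they already agree as elements of $\mathcal{M}(W)$, and locality for $\mathcal{M}$ forces $\mu=\nu$. So the real content lies entirely in gluing.

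Given an open cover $\mathcal{U}$ of $W$ and a compatible family $\{\mu_U\}_{U\in\mathcal{U}}$ with $\mu_U\in\mathcal{M}_{\text{reg}}(U)$, the sheaf property of $\mathcal{M}$ produces a Borel measure $\mu\in\mathcal{M}(W)$ with $\mu|_U=\mu_U$ for all $U\in\mathcal{U}$. Everything reduces to showing that $\mu$ is regular, and by the recalled equivalence this amounts to showing $\mu(K)<\infty$ for every compact $K\subset W$. Here the naive glued formula $\mu(B)=\sum_i\mu_i(B(i))$ from the proof of \cref{prop:sheaf of measures} is unhelpful, since the pieces $K(i)\subset K\cap V_i$ need be neither compact nor of finite $\mu_i$-measure; this is the main obstacle.

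To get around it I would exploit local compactness to redistribute $K$ into compact pieces, each sitting inside a single cover element. I cover $K$ by finitely many $U_1,\dots,U_n\in\mathcal{U}$, and by the standard shrinking argument in a locally compact Hausdorff space choose open sets $O_j$ with compact closure satisfying $\overline{O_j}\subset U_j$ and $K\subset O_1\cup\cdots\cup O_n$. Then each $K_j:=K\cap\overline{O_j}$ is compact, is contained in $U_j$, and $K=\bigcup_{j=1}^n K_j$. Since $K_j\subset U_j$ is Borel we have $\mu(K_j)=\mu|_{U_j}(K_j)=\mu_{U_j}(K_j)$, which is finite because $\mu_{U_j}$ is regular and $K_j$ is a compact subset of $U_j$. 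Subadditivity then yields $\mu(K)\leq\sum_{j=1}^n\mu_{U_j}(K_j)<\infty$, so $\mu$ is finite on compact sets and hence regular, which completes gluing. The crux is thus the shrinking step, which converts a compact set straddling several cover elements into finitely many compact pieces, each governed by a single regular $\mu_{U_j}$.
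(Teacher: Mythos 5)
Your proposal is correct and follows essentially the same route as the paper: locality is inherited from $\mathcal{M}$, the glued Borel measure is produced by \cref{prop:sheaf of measures}, and regularity is checked by shrinking a finite subcover of a compact $K$ so that each closure lies inside a single cover element, writing $K$ as a finite union of compact pieces each controlled by one regular $\mu_{U_j}$. The only cosmetic difference is the order of operations (the paper first forms the shrunken cover $\mathcal{V}$ of all of $X$ and then extracts a finite subcover of $K$, whereas you extract the finite subcover first and then shrink), which changes nothing.
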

\begin{proof}$ $\newline
    Note that since $\mathcal{M}_{\text{reg}}$ is a sub-presheaf of $\mathcal{M}$ we only need to check that $\mathcal{M}_{\text{reg}}$ is a sheaf of sets on $X$ to conclude that $\mathcal{M}_{\text{reg}}$ is a sub-sheaf of $\mathcal{M}$.

    We have that any subspace of $X$ is second-countable locally compact and Hausdorff, hence it suffices to prove that locality and gluing holds for $X$ to conclude that it holds for any open subset of $X$.

    Let $\mathcal{U}$ be an open cover of $X$. Locality for $\mathcal{M}_{\text{reg}}$ follows by the same argument as for $\mathcal{M}$. To prove gluing, suppose we have regular Borel measures $\mu_U\in\mathcal{M}_{\text{reg}}(U)$ for each $U\in\mathcal{U}$ such that $\mu_U|_{U\cap V} = \mu_V|_{U\cap V}$ for all $U,V \in \mathcal{U}$. We get a global measure $\mu\in\mathcal{M}(X)$ that restricts to each $\mu_U$ for $U\in\mathcal{U}$ by \cref{prop:sheaf of measures}. It remains to check that $\mu$ is in fact regular. Let
    \begin{equation*}
        \mathcal{V} = \{ V\subset X \ | \ V \text{ is open in } X \text{ and } \overline{V}\subset U \text{ for some } U\in\mathcal{U} \}.
    \end{equation*}
    This is an open cover of $X$: For any $x\in X$ let $U\in\mathcal{U}$ be a neighborhood about $x$. Then $X\setminus U$ is closed, and since $X$ is locally compact Hausdorff, $X$ is in particular regular, so we can find disjoint open subsets $V$ and $W$ containing $x$ and $X\setminus U$ respectively. Then $\overline{V}\subset U$, showing that $V\in\mathcal{V}$.

    Let now $K\subset X$ be compact, then $\mathcal{V}$ is an open cover of $K$, so it admits a finite sub-cover $\{V_i\}_{i=1}^{n}$ of $K$. Let $U_i\in\mathcal{U}$ be such that $V_i\subset\overline{V_i}\subset U_i$. Then $K\cap \overline{V_i} \subset U_i$ is compact. Thus,
    \begin{equation*}
        \mu(K) 
        \leq \sum_{i=1}^{n}\mu(K\cap \overline{V_i})
        = \sum_{i=1}^{n}\mu|_{U_i}(K\cap \overline{V_i})
        = \sum_{i=1}^{n}\mu_{U_i}(K\cap \overline{V_i})
        < \infty.
    \end{equation*}
    Hence, since $X$ is locally compact Hausdorff we get that $\mu$ is in fact regular, proving that gluing holds for $\mathcal{M}_{\text{reg}}$.
\end{proof}

With this in place we can easily define the pullback of a regular Borel measure.

\begin{proposition}\label{prop:pullback of measure}$ $\newline
    Let $f:X \rightarrow Y$ be a local homeomorphism between second-countable locally compact Hausdorff spaces and $\mu$ be a regular Borel measure on $Y$. Then there exists a unique regular Borel measure $f^*\mu$ on $X$, the \emph{pullback of $\mu$}, such that $f^*\mu(U) = \mu(f(U))$ for all open subsets $U\subset X$ where $f|_U$ is injective.
\end{proposition}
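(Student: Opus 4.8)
The plan is to apply the sheaf structure of $\mathcal{M}_{\text{reg}}$ established in \cref{prop:sheaf of regular measures}. Let $\mathcal{U}$ be the collection of open subsets $U\subset X$ on which $f|_U$ is injective. Since $f$ is a local homeomorphism every point of $X$ has such a neighborhood, so $\mathcal{U}$ is an open cover of $X$; moreover any open subset of a member of $\mathcal{U}$ again lies in $\mathcal{U}$. Because local homeomorphisms are open maps, for each $U\in\mathcal{U}$ the restriction $f|_U$ is a continuous open injection, hence a homeomorphism onto the open set $f(U)\subset Y$, and in particular it carries Borel sets of $U$ to Borel sets of $f(U)$. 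I would then define a local pullback $\mu_U\in\mathcal{M}(U)$ by $\mu_U(B)=\mu(f(B))$ for Borel $B\subset U$.

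First I would check that each $\mu_U$ is in fact a regular Borel measure, i.e.\ $\mu_U\in\mathcal{M}_{\text{reg}}(U)$. Countable additivity is immediate from the injectivity of $f|_U$ (disjoint sets have disjoint images) together with countable additivity of $\mu$. Regularity follows because $f|_U$ is a homeomorphism onto the open subset $f(U)$, on which $\mu$ restricts to a regular Borel measure (as in \cref{lemma:presheaf of regular measures}), and transporting a regular Borel measure across a homeomorphism preserves inner- and outer-regularity as well as finiteness on compact sets. I expect this regularity verification, together with the observation that $f|_U$ maps Borel sets to Borel sets, to be the only genuinely technical step; everything else is a direct application of the sheaf axioms.

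Next I would verify the gluing hypothesis: for $U,V\in\mathcal{U}$ and a Borel set $B\subset U\cap V$, both $\mu_U|_{U\cap V}(B)$ and $\mu_V|_{U\cap V}(B)$ equal $\mu(f(B))$, so the family $\{\mu_U\}_{U\in\mathcal{U}}$ is compatible on overlaps. By the gluing axiom from \cref{prop:sheaf of regular measures} there is a global section $f^*\mu\in\mathcal{M}_{\text{reg}}(X)$ with $f^*\mu|_U=\mu_U$ for every $U\in\mathcal{U}$; in particular $f^*\mu(U)=\mu_U(U)=\mu(f(U))$, which is the asserted defining property.

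For uniqueness, suppose $\nu\in\mathcal{M}_{\text{reg}}(X)$ also satisfies $\nu(U)=\mu(f(U))$ for all $U\in\mathcal{U}$. Fix $U\in\mathcal{U}$. Every open $W\subset U$ again lies in $\mathcal{U}$, so $\nu|_U(W)=\nu(W)=\mu(f(W))=\mu_U(W)$; thus $\nu|_U$ and $\mu_U$ agree on all open subsets of $U$, and since both are regular, outer regularity forces them to agree on every Borel subset of $U$, giving $\nu|_U=f^*\mu|_U$. As this holds for each $U\in\mathcal{U}$, the locality axiom of \cref{prop:sheaf of regular measures} yields $\nu=f^*\mu$, completing the proof.
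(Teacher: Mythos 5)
Your proof is correct and follows essentially the same route as the paper: cover $X$ by the open sets on which $f$ is injective, define the local pullbacks $\mu_U(B)=\mu(f(B))$, verify regularity and compatibility on overlaps, and glue via \cref{prop:sheaf of regular measures}. Your uniqueness step is in fact slightly more careful than the paper's, since you observe that any candidate measure agreeing with $\mu(f(\cdot))$ on open subsets of each $U$ must agree with $\mu_U$ on all Borel subsets of $U$ by outer regularity before invoking the locality axiom.
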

\begin{proof}$ $\newline
    Let
    \begin{equation*}
        \mathcal{U} = \{U\subset X \ | \ U \text{ is open in } X \text{ and } f|_U \text{ is injective} \}.
    \end{equation*}
    Since $f$ is a local homeomorphism this is an open cover of $X$. For each $U\in\mathcal{U}$ we get a regular Borel measure $f^*\mu_U\in\mathcal{M}_{\text{reg}}(U)$ by the equation $f^*\mu_U(B) = \mu(f(B))$, where $B\subset U$ is a Borel subset. The fact that this defines a Borel measure is obvious. Regularity follows as such: Let $K\subset U$ be compact, then $f^*\mu_U(K) = \mu(f(K)) < \infty$ by compactness of $f(K)$ which follows by continuity of $f$.

    Now we need to show that these measures agree on intersections. Let $U,V\in\mathcal{U}$. Then for any Borel subset $B\subset U\cap V$ we have that
    \begin{equation*}
        f^*\mu_U|_{U\cap V}(B) = f^*\mu_U(B) = \mu(f(B)) = f^*\mu_V|_{U\cap V}(B).
    \end{equation*}
    Since $\mathcal{M}_{\text{reg}}$ is a sheaf on $X$ by \cref{prop:sheaf of regular measures}, we get that there exists a unique regular Borel measure $f^*\mu$ on $X$ such that for any $U\in\mathcal{U}$
    \begin{equation*}
        f^*\mu(U) = f^*\mu|_U(U) = f^*\mu_U(U) = \mu(f(U)). \qedhere
    \end{equation*}
\end{proof}

We are now also able to prove that this definition of the pullback agrees with the one obtained by \cref{eqn:old pullback of measure}. Even though the pullback as defined through \cref{eqn:old pullback of measure} is a well-known construction, we choose to present the construction here. We will omit the proof of this next result as it is easily obtained using standard methods.

\begin{lemma}\label{lemma:pullback of measure 2}$ $\newline
    Let $X$ and $Y$ be topological spaces, $\varphi:X\rightarrow Y$ a local homeomorphism and $f\in C_c(X)$. Then the function
    \begin{equation*}
        y\mapsto \sum_{x\in\varphi^{-1}(y)}f(x)
    \end{equation*}
    from $Y$ to the complex numbers $\mathbb{C}$ is well-defined and continuous with compact support. \qed
\end{lemma}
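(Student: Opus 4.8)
The plan is to let $K = \supp(f)$, which is compact by assumption, and to study the fibrewise sum $g(y) = \sum_{x \in \varphi^{-1}(y)} f(x)$ through the local structure of $\varphi$. Since $\varphi$ is a local homeomorphism, every point of $X$ has an open neighborhood on which $\varphi$ restricts to a homeomorphism onto an open subset of $Y$; call such an open set \emph{evenly covered}. These evenly covered sets form an open cover of $X$, hence of $K$, so by compactness finitely many of them, say $O_1, \dots, O_n$, cover $K$. Because $\varphi|_{O_i}$ is injective, each $O_i$ meets any fibre $\varphi^{-1}(y)$ in at most one point, so $\varphi^{-1}(y) \cap K \subset O_1 \cup \dots \cup O_n$ has at most $n$ elements. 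As $f$ vanishes off $K$, only these finitely many points contribute to the sum, which shows that $g$ is well defined.

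For continuity I would argue locally at a fixed $y_0 \in Y$. Write $\varphi^{-1}(y_0) \cap K = \{x_1, \dots, x_m\}$ and, for each $j$, choose an evenly covered open neighborhood $U_j$ of $x_j$ with continuous local section $\psi_j = (\varphi|_{U_j})^{-1} : \varphi(U_j) \to U_j$; after shrinking I may assume the $U_j$ are pairwise disjoint. The key step is to produce an open neighborhood $W$ of $y_0$, contained in $\bigcap_j \varphi(U_j)$, such that $\varphi^{-1}(W) \cap K \subset U_1 \cup \dots \cup U_m$. Granting this, for every $y \in W$ the only fibre points in $K$ lie one in each $U_j$ and equal $\psi_j(y)$, so that
\begin{equation*}
    g(y) = \sum_{j=1}^m f(\psi_j(y))
\end{equation*}
on $W$. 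Each $f \circ \psi_j$ is continuous as a composition of continuous maps, so $g$ agrees with a finite sum of continuous functions on $W$ and is therefore continuous at $y_0$.

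To obtain the neighborhood $W$, I would set $C = K \setminus (U_1 \cup \dots \cup U_m)$, a closed subset of $K$ and hence compact, and observe that $y_0 \notin \varphi(C)$ since every point of $\varphi^{-1}(y_0) \cap K$ lies in some $U_j$. Then $\varphi(C)$ is compact, and in the locally compact Hausdorff setting relevant to this paper it is closed, so $W = \bigl(\bigcap_j \varphi(U_j)\bigr) \setminus \varphi(C)$ is the desired open neighborhood of $y_0$. For the support it then suffices to note that $g$ vanishes on $Y \setminus \varphi(K)$, because any $y$ outside $\varphi(K)$ has $\varphi^{-1}(y) \cap K = \emptyset$; since $\varphi(K)$ is the continuous image of a compact set and hence compact, $\supp(g)$ is contained in the compact set $\varphi(K)$, giving $g \in C_c(Y)$. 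I expect the continuity step — specifically, ruling out contributions to the sum from fibre points that drift in from outside $\bigcup_j U_j$ as $y \to y_0$ — to be the main obstacle, and it is exactly here that compactness of $\supp(f)$, through the compact set $C$, does the essential work.
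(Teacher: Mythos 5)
The paper deliberately omits a proof of this lemma (``easily obtained using standard methods''), so there is nothing to compare against line by line; your argument is the standard one and it is correct. Well-definedness via a finite cover of $K=\supp(f)$ by evenly covered sets, continuity at $y_0$ via disjoint evenly covered neighborhoods $U_1,\dots,U_m$ of the finitely many fibre points in $K$ together with the compact set $C=K\setminus\bigcup_j U_j$, and the inclusion $\supp(g)\subset\varphi(K)$ are exactly the steps one expects, and you correctly identify the only delicate point: excluding fibre contributions that approach $y_0$ from outside $\bigcup_j U_j$, which is where compactness of $C$ and closedness of $\varphi(C)$ enter. One remark worth making explicit: the lemma as stated in the paper assumes only that $X$ and $Y$ are topological spaces, but your step ``$\varphi(C)$ is compact, hence closed'' genuinely requires $Y$ to be Hausdorff, and this is not a removable convenience --- the statement is false without it (take $Y$ to be the line with two origins, $X=\mathbb{R}$ the open embedding omitting one origin, and $f$ supported near $0$ with $f(0)=1$; the fibrewise sum is then discontinuous at the second origin). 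Since the lemma is only ever applied in the paper to locally compact Hausdorff spaces, your restriction to that setting is the right reading, but you could sharpen your write-up by noting that only Hausdorffness of $Y$ (not local compactness) is actually used, and that the hypothesis ``topological spaces'' in the statement should be read with that correction.
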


With this Lemma we get a linear map $\mathcal{L}:C_c(X) \rightarrow C_c(Y)$ defined by
\begin{equation*}
    (\mathcal{L}f)(y) = \sum_{x\in\varphi^{-1}(y)}f(x).
\end{equation*}
Maps of this kind are called \emph{transfer operators}, see for example \cites{Baladi2000} for an introduction to their use in dynamics.

\begin{proposition}\label{prop:pullback of measure 2}$ $\newline
    Let $\varphi:X\rightarrow Y$ be a local homeomorphism between locally compact Hausdorff spaces and $\mu$ be a regular Borel measure on $Y$. Then there exists a unique regular Borel measure $\varphi^*\mu$ on $X$ such that
    \begin{equation*}
        \int_X f d\varphi^*\mu = \int_Y \sum_{x\in\varphi^{-1}(y)}f(x)d\mu(y),
    \end{equation*}
     where $f\in C_c(X)$.
\end{proposition}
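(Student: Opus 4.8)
The plan is to realize $\varphi^*\mu$ as the Riesz--Markov representing measure of a positive linear functional built from the transfer operator $\mathcal{L}$. By \cref{lemma:pullback of measure 2} the assignment $f \mapsto \mathcal{L}f$ maps $C_c(X)$ into $C_c(Y)$, so I may define a linear functional $I : C_c(X) \to \mathbb{C}$ by
\begin{equation*}
    I(f) = \int_Y (\mathcal{L}f)(y)\, d\mu(y) = \int_Y \sum_{x\in\varphi^{-1}(y)} f(x)\, d\mu(y).
\end{equation*}
Linearity of $I$ is immediate from linearity of $\mathcal{L}$ together with linearity of the integral against $\mu$, and $I$ is well-defined precisely because $\mathcal{L}f \in C_c(Y)$ is integrable against the regular (hence locally finite) measure $\mu$.

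The key observation is that $I$ is \emph{positive}. If $f \in C_c(X)$ satisfies $f \geq 0$, then for each $y \in Y$ the value $(\mathcal{L}f)(y) = \sum_{x\in\varphi^{-1}(y)} f(x)$ is a finite sum of non-negative numbers and hence non-negative; since $\mu$ is a positive measure, integrating the non-negative function $\mathcal{L}f$ yields $I(f) \geq 0$. Thus $I$ is a positive linear functional on $C_c(X)$.

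With positivity established I would invoke the Riesz representation theorem for positive linear functionals on $C_c(X)$, which applies since $X$ is locally compact Hausdorff: it produces a regular Borel measure $\varphi^*\mu$ on $X$ satisfying $I(f) = \int_X f \, d\varphi^*\mu$ for all $f \in C_c(X)$, which is exactly the asserted identity, and it supplies uniqueness of $\varphi^*\mu$ within the class of regular Borel measures. No genuine obstacle arises here; the only point demanding care is bookkeeping with conventions, namely checking that the measure furnished by the cited form of the Riesz theorem is regular in the sense used in this paper (inner- and outer-regular and finite on compacta) and that its uniqueness is asserted within exactly that class. On a locally compact Hausdorff space this matching is precisely the content of the standard statement, so the proposition follows directly.
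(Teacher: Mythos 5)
Your proposal is correct and follows exactly the route the paper takes: the paper's proof is the one-line remark that the result ``follows by \cref{lemma:pullback of measure 2} along with Riesz representation Theorem,'' and you have simply spelled out the details (the positive linear functional $f \mapsto \int_Y \mathcal{L}f\,d\mu$ and the regularity/uniqueness conventions). No discrepancy to report.
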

\begin{proof}$ $\newline
    The proof follows by \cref{lemma:pullback of measure 2} along with Riesz representation Theorem.
\end{proof}

\begin{proposition}\label{prop:uniqueness of pullback}$ $\newline
    Let $\varphi:X\rightarrow Y$ be a local homeomorphism between second-countable locally compact Hausdorff spaces and $\mu$ be a regular Borel measure on $Y$. Then the pullback of $\mu$ as in \cref{prop:pullback of measure} equals the one in \cref{prop:pullback of measure 2}.
\end{proposition}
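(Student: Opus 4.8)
The plan is to show that the measure produced by the gluing construction of \cref{prop:pullback of measure} satisfies the integral identity characterizing the pullback in \cref{prop:pullback of measure 2}; the uniqueness asserted in \cref{prop:pullback of measure 2} then forces the two to coincide. Write $\nu$ for the measure $\varphi^*\mu$ obtained from \cref{prop:pullback of measure}, so that $\nu$ is a regular Borel measure on $X$ with $\nu|_U = \varphi^*\mu_U$ for every $U$ in the cover $\mathcal{U}$ of injectivity neighborhoods, where $\varphi^*\mu_U(B) = \mu(\varphi(B))$. It then suffices to verify
\begin{equation*}
    \int_X f \, d\nu = \int_Y \sum_{x\in\varphi^{-1}(y)} f(x) \, d\mu(y) \qquad \text{for all } f\in C_c(X).
\end{equation*}

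First I would reduce to test functions supported in a single injectivity neighborhood. Since $\supp f$ is compact and $\mathcal{U}$ covers $X$, finitely many $U_1,\dots,U_n\in\mathcal{U}$ cover $\supp f$, and I would take a partition of unity $\{\psi_i\}_{i=1}^n$ subordinate to this cover with $\sum_i \psi_i \equiv 1$ on $\supp f$. Both sides of the desired identity are linear in $f$ (the right-hand side through the transfer operator $\mathcal{L}$ of \cref{lemma:pullback of measure 2}), so writing $f = \sum_i \psi_i f$ reduces the problem to proving the identity for each $g := \psi_i f$, which is supported in a single $U := U_i$ on which $\varphi$ is injective.

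The core computation is a change of variables along the homeomorphism $\varphi|_U : U \to \varphi(U)$. On the left, $\nu|_U = \varphi^*\mu_U$ is precisely the pushforward of $\mu|_{\varphi(U)}$ along the inverse homeomorphism $(\varphi|_U)^{-1}$, since for Borel $B\subset U$ one has $\varphi^*\mu_U(B) = \mu(\varphi(B)) = \bigl((\varphi|_U)^{-1}\bigr)_*\bigl(\mu|_{\varphi(U)}\bigr)(B)$. The standard change-of-variables formula for pushforward measures then gives
\begin{equation*}
    \int_X g\, d\nu = \int_U g\, d(\nu|_U) = \int_{\varphi(U)} g\circ(\varphi|_U)^{-1} \, d\mu.
\end{equation*}
On the right, because $g$ vanishes outside $U$ and $\varphi|_U$ is injective, for each $y$ the fibre sum $\sum_{x\in\varphi^{-1}(y)} g(x)$ collapses to the single term $g\bigl((\varphi|_U)^{-1}(y)\bigr)$ when $y\in\varphi(U)$ and to $0$ otherwise; hence $\mathcal{L}g = (g\circ(\varphi|_U)^{-1})\cdot\mathbbm{1}_{\varphi(U)}$, whose integral against $\mu$ is again $\int_{\varphi(U)} g\circ(\varphi|_U)^{-1}\,d\mu$. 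The two sides therefore agree for $g$, and summing over $i$ yields the identity for $f$.

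The only genuinely delicate point is this change-of-variables step, which rests on the observation that the local pullback $\varphi^*\mu_U$, defined set-theoretically by $B\mapsto\mu(\varphi(B))$, is exactly a pushforward measure along a homeomorphism; once this is identified, integrating a function against it is the routine transport-of-structure formula. Everything else—the finite partition of unity, interchanging the finite sum with the integral, and the final appeal to uniqueness—is standard. Having established $\int_X f\,d\nu = \int_Y \mathcal{L}f\,d\mu$ for all $f\in C_c(X)$, the uniqueness clause of \cref{prop:pullback of measure 2} identifies $\nu$ with the measure defined there, completing the proof.
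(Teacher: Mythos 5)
Your proof is correct, but it runs in the opposite direction from the paper's. The paper takes the two measures $\lambda$ (from \cref{prop:pullback of measure}) and $\nu$ (from \cref{prop:pullback of measure 2}) and invokes the \emph{locality} axiom of the sheaf $\mathcal{M}_{\text{reg}}$: it suffices to check $\lambda|_U=\nu|_U$ on each injectivity neighborhood $U$, which it does by evaluating the integral formula on the indicator $\chi_B$ of a Borel set $B\subset U$, where the fibre sum collapses to $\chi_{\varphi(B)}$. You instead verify that the sheaf-glued measure satisfies the defining integral identity of \cref{prop:pullback of measure 2} for every $f\in C_c(X)$ --- localizing by a partition of unity rather than by sheaf locality --- and then appeal to the uniqueness clause of that proposition. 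The core computation (fibre sums collapse on an injectivity neighborhood, and $B\mapsto\mu(\varphi(B))$ is a pushforward along $(\varphi|_U)^{-1}$) is the same in both. Your version has a small advantage: by working only with continuous test functions you avoid the paper's implicit step of applying the $C_c(X)$ integral formula to the indicator $\chi_B$, which strictly speaking requires an extension of that formula beyond $C_c(X)$ (routine via regularity, but unaddressed in the paper). The paper's version is shorter and showcases the sheaf machinery the section is built around. Both are valid.
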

\begin{proof}$ $\newline
    Let $\lambda$ be the pullback of $\mu$ as in \cref{prop:pullback of measure} and $\nu$ be the pullback of $\mu$ as in \cref{prop:pullback of measure 2}. Let
    \begin{equation*}
        \mathcal{U} = \{U\subset X \ | \ U \text{ is open in } X \text{ and } \varphi|_U \text{ is injective} \}.
    \end{equation*}
    This is an open cover of $X$ since $\varphi$ is a local homeomorphism. By \cref{prop:sheaf of regular measures} we know that $\mathcal{M}_{\text{reg}}$ is a sheaf on $X$, so we need only check that $\lambda|_U = \nu|_U$ for each $U\in\mathcal{U}$ to conclude that $\lambda = \nu$. Let $U\in\mathcal{U}$ and $B\subset U$ be a Borel subset. Then
    \begin{align*}
        \nu|_U(B)
        & = \nu(B) \\
        & = \int_Y\sum_{x\in\varphi^{-1}(y)}\chi_B(x) d\mu(y) \\
        & = \int_Y\chi_{\varphi(B)} d\mu \\
        & = \lambda|_U(B). \qedhere
    \end{align*}
\end{proof}

We end this section by briefly recalling the definition of the \emph{pushforward} of a Borel measure: Let $X$ and $Y$ be topological spaces, $f:X\rightarrow Y$ be continuous and $\mu$ be a Borel measure on $X$. We get a Borel measure $f_*\mu$ on $Y$, the \emph{pushforward of $\mu$}, defined by $f_*\mu(B) = \mu(f^{-1}(B))$ for Borel subsets $B\subset Y$. It is clear that for the pushforward of regular measure to be regular it is sufficient for the function $f$ in this definition to be proper.

\section{Quasi-invariant measures on a topological graph}\label{sec:quasi-invariant measures}

We want to specialize \cref{prop:Neshveyev's description} for the graph groupoid, $\mathcal{G}_E$, of a second-countable topological graph. This amounts to giving a description of the set $\Delta(e^{-\beta\Phi})$ only using data from the topological graph. Christensen does this for the $\beta = 0$ case, i.e. he obtains a description of the tracial-weights on $C^*(E)$, c.f.~\cite{Christensen2022}*{Theorem 1.3}, note that describing the tracial-weights is a bit more convoluted than describing the KMS$_{\beta}$-weights for values of $\beta\neq0$.

The regular Borel measures $\nu$ on $\partial E$ that will be of interest will turn out to be those that satisfy the following equality: $\sigma^* \nu = e^\beta \nu$ on $\partial E \setminus E^0$. Since we want to mainly think about this property we make the following definition.

\begin{definition}\label{def:quasi inv measures}$ $\newline
    Let $E$ be a second-countable topological graph and $\beta\in\mathbb{R}$. We say that a regular Borel measure $\nu$ on the boundary path space $\partial E$ is \emph{$\beta$-quasi-invariant} if $\sigma^*\nu = e^\beta \nu$ on $\partial E \setminus E^0$. We denote the set of all $\beta$-quasi invariant measures by $\mathcal{M}^\beta_{\text{quasi}}(E)$.
\end{definition}

In the next result the range and source maps, denoted $r$ and $s$ respectively, will refer to the range and source maps of the groupoid $\mathcal{G}_E$, not the topological graph.

\begin{lemma}\label{lemma:invariant measures graph}$ $\newline
    Let $E$ be a second-countable topological graph, $\beta\in\mathbb{R}\setminus\{0\}$, $\Phi:\mathcal{G}_E\rightarrow\mathbb{Z}$ be given by $\Phi(a,n,b) = n$. Then $\Delta(e^{-\beta\Phi}) \cong \mathcal{M}^\beta_{\text{quasi}}(E)$.
\end{lemma}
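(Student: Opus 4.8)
The plan is to observe first that the unit space $\mathcal{G}_E^{(0)}$ is homeomorphic to $\partial E$ via $(a,0,a)\mapsto a$, so that both $\Delta(e^{-\beta\Phi})$ and $\mathcal{M}^\beta_{\text{quasi}}(E)$ are sets of regular Borel measures on $\partial E$. Under this identification the claimed isomorphism is simply the identity on measures, which is manifestly affine, so the entire content is to prove that a regular Borel measure $\mu$ on $\partial E$ is quasi-invariant with Radon--Nikodym cocycle $e^{-\beta\Phi}$ if and only if $\sigma^*\mu = e^\beta\mu$ on $\partial E\setminus E^0$. Since the range and source maps of the étale groupoid $\mathcal{G}_E$ are local homeomorphisms, the pullbacks $r^*\mu$ and $s^*\mu$ are well-defined regular Borel measures on $\mathcal{G}_E$ by \cref{prop:pullback of measure}, and the task is to compare them.

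To do the comparison I would work bisection by bisection and then invoke that $\mathcal{M}_{\text{reg}}$ is a sheaf (\cref{prop:sheaf of regular measures}). Because $\mathbb{Z}$ is discrete and $\Phi$ is continuous, $\mathcal{G}_E = \bigsqcup_{n\in\mathbb{Z}}\Phi^{-1}(n)$ is a clopen partition, and the condition $\frac{d(r^*\mu)}{d(s^*\mu)} = e^{-\beta\Phi}$ is equivalent to the equality of measures $r^*\mu = e^{-\beta n}\,s^*\mu$ on each $\Phi^{-1}(n)$. Each $\Phi^{-1}(n)$ is covered by countably many basic bisections $W = Z(U,k,l,V)$ with $k-l=n$, on which $r|_W$ and $s|_W$ are homeomorphisms onto $U$ and $V$. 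Transporting to $U$ along $r|_W$, one computes that $r^*\mu$ becomes $\mu|_U$ while $s^*\mu$ becomes the set function $B\mapsto \mu(\tau(B))$, where $\tau = s\circ(r|_W)^{-1}:U\to V$ sends $a$ to the unique $b\in V$ with $\sigma^k(a)=\sigma^l(b)$. Thus on $W$ the quasi-invariance condition reads $\mu(B) = e^{-\beta(k-l)}\mu(\tau(B))$ for all Borel $B\subset U$.

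For the forward implication I specialize to degree-one bisections $W_U = \{(a,1,\sigma(a)):a\in U\}$, with $U\subset\partial E\setminus E^0$ open and $\sigma|_U$ injective; here $\tau=\sigma|_U$, and the displayed identity becomes exactly $\sigma^*\mu|_U = e^\beta\mu|_U$. As these $U$ cover $\partial E\setminus E^0$, locality yields $\sigma^*\mu = e^\beta\mu$ there. For the converse I would iterate the degree-one relation: using functoriality of the pullback (which follows from the characterization $f^*\mu(U)=\mu(f(U))$ in \cref{prop:pullback of measure}), the hypothesis $\sigma^*\mu = e^\beta\mu$ gives $(\sigma^m)^*\mu = e^{\beta m}\mu$, i.e.\ $\mu(\sigma^m(B)) = e^{\beta m}\mu(B)$, on the domain of $\sigma^m$ for every $m$. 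Applying this with $m=k$ on $U$ and $m=l$ on $V$, and using $\sigma^l(\tau(B)) = \sigma^k(B)$, gives $e^{\beta k}\mu(B) = \mu(\sigma^k(B)) = \mu(\sigma^l(\tau(B))) = e^{\beta l}\mu(\tau(B))$, which is precisely $\mu(B) = e^{-\beta(k-l)}\mu(\tau(B))$. Hence the quasi-invariance identity holds on every basic bisection, and gluing over a countable base of bisections via \cref{prop:sheaf of regular measures} upgrades this to the global equality $r^*\mu = e^{-\beta n}s^*\mu$ on each $\Phi^{-1}(n)$, so that $\mu\in\Delta(e^{-\beta\Phi})$.

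I expect the main obstacle to be the converse direction, specifically propagating the degree-one relation to arbitrary degree: this requires functoriality of the pullback under composition of local homeomorphisms and careful bookkeeping of the domains of $\sigma^m$, which shrink as $m$ grows since finite paths of length less than $m$ leave the domain. One must also check that the bisection-wise equalities genuinely glue to the global measure equality $r^*\mu = e^{-\beta n}s^*\mu$, from which the Radon--Nikodym statement then follows. The remaining steps — that the sets $Z(U,k,l,V)$ with the stated injectivity form a countable base of bisections, and that the transported measure is the claimed set function $B\mapsto\mu(\tau(B))$ — are routine given the étale structure and \cref{prop:pullback of measure}.
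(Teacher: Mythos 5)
Your proposal is correct and follows essentially the same route as the paper: the same reduction of the Radon--Nikodym condition to the fibrewise identity $r^*\mu = e^{-\beta n}s^*\mu$ on the clopen sets $\Phi^{-1}(n)$, the same degree-one bisections $Z(U,1,0,\sigma(U))$ for the forward direction, and the same iteration of the shift relation on bisections $Z(U,k,l,V)$ (with $\sigma^k|_U$, $\sigma^l|_V$ injective) glued via the sheaf property of $\mathcal{M}_{\text{reg}}$ for the converse. The obstacles you flag (functoriality of the pullback, shrinking domains of $\sigma^m$, and the gluing step) are exactly the points the paper handles, so no further changes are needed.
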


\begin{proof}$ $\newline
    We first want an easier way to check if a regular Borel measure is quasi-invariant with Radon-Nikodym cocycle $e^{-\beta\Phi}$. To that end, suppose that $\nu \in \Delta(e^{-\beta\Phi})$ and let $B \subset \mathcal{G}_E$ be a Borel subset. Then we have that $B \cap \Phi^{-1}(\{k\})$ is a Borel subset for any $k \in \mathbb{Z}$ since $\Phi$ is continuous. Hence, 
    \begin{align*}
        r^*\nu(B)
        & = \int_B e^{-\beta\Phi}d(s^*\nu) \\
        & = \sum_{k\in\mathbb{Z}} \int_{B\cap\Phi^{-1}(\{k\})} e^{-\beta k}d(s^*\nu) \\
        & = \sum_{k\in\mathbb{Z}} e^{-\beta k} s^*\nu(B \cap \Phi^{-1}(\{k\})).
    \end{align*}
    So we have that $\nu\in\Delta(e^{-\beta\Phi})$ if and only if
    \begin{equation}\label{eqn:quasi-inv graph}
        r^*\nu = \sum_{k\in\mathbb{Z}} e^{-\beta k} (s^*\nu)|_{\Phi^{-1}(\{k\})},
    \end{equation}
    where the notation $(s^*\nu)|_{\Phi^{-1}(\{k\})}$ simply means that
    \begin{equation*}
        (s^*\nu)|_{\Phi^{-1}(\{k\})}(B) = s^*\nu(B\cap\Phi^{-1}(\{k\}))
    \end{equation*}
    for Borel subsets $B\subset\mathcal{G}_E$. Note that if $W\subset \mathcal{G}_E$ is an open bisection we have that \cref{eqn:quasi-inv graph} takes the form
    \begin{equation*}
        \nu(r(W)) = \sum_{k\in\mathbb{Z}}e^{-\beta k}\nu(s(W\cap\Phi^{-1}(\{k\}))).
    \end{equation*}

    Now consider the collection
    \begin{equation*}
        \mathcal{U} = \{ U\subset\partial E\setminus E^0 \ | \ U \text{ is open in } \partial E \text{ and } \sigma|_U \text{ is injective} \}.
    \end{equation*}
    Since $\sigma$ is a local homeomorphism we have that $\mathcal{U}$ is an open cover of $\partial E\setminus E^0$. By \cref{prop:sheaf of regular measures}, $\mathcal{M}_{\text{reg}}$ is a sheaf on $\partial E\setminus E^0$, so it suffices to check that $\sigma^*\nu = e^\beta\nu$ on each $U\in\mathcal{U}$ to conclude that they are equal on $\partial E\setminus E^0$.
    
    We claim that for any $U\in\mathcal{U}$ the basic open set $Z(U,1,0,\sigma(U)) \subset \mathcal{G}_E$ is a bisection. Indeed, any element in $Z(U,1,0,\sigma(U))$ is of the form $(a,1,\sigma(a))$ by injectivity of $\sigma|_U$. Hence, the equations
    \begin{align*}
        a  
       &= r( a ,1,\sigma( a ))
       = r( b,1,\sigma( b))
       =  b, \\
       \sigma( a ) 
       &= s( a ,1,\sigma( a ))
       = s( b,1,\sigma( b))
       = \sigma( b),
   \end{align*}
   show that $r|_{Z(U,1,0,\sigma(U))}$ and $s|_{Z(U,1,0,\sigma(U))}$ are injective.

   Fix $U\in\mathcal{U}$ and let $B\subset U$ be a Borel subset. It is clear that $Z(B,1,0,\sigma(B))$ is a Borel subset of $Z(U,1,0,\sigma(U))$ and that $Z(B,1,0,\sigma(B))\cap\Phi^{-1}(\{k\})$ is nonempty if and only if $k=1$, and in this case $Z(B,1,0,\sigma(B))\cap\Phi^{-1}(\{1\}) = Z(B,1,0,\sigma(B))$. Hence, by \cref{eqn:quasi-inv graph} we get that
   \begin{align*}
    \nu(B) 
    & = r^*\nu(Z(B,1,0,\sigma(B))) \\
    & = e^{-\beta}s^*\nu(Z(B,1,0,\sigma(B))) \\
    & = e^{-\beta}\nu(\sigma(B)).
   \end{align*}
   So we indeed have that $\sigma^*\nu = e^\beta \nu$ on each $U \in \mathcal{U}$, so we have that $\sigma^*\nu = e^\beta \nu$ on $\partial E \setminus E^0$.
   
   For the converse we will show that \cref{eqn:quasi-inv graph} holds. By \cref{prop:sheaf of regular measures}, $\mathcal{M}_{\text{reg}}$ is a sheaf on $\mathcal{G}_E$, so it suffices to check that \cref{eqn:quasi-inv graph} holds on each open subset in some suitable open cover of $\mathcal{G}_E$. The open cover we will use is the following:
   \begin{equation*}
    \mathcal{V} = \{ Z(U,m,n,V) \ | \ m,n\in\mathbb{N}_0, U\in\mathcal{U}_m \text{ and } V\in\mathcal{U}_n \},
   \end{equation*}
   where
   \begin{equation*}
    \mathcal{U}_k = \{ U\subset \partial E\setminus E^{k-1} \ | \ U \text{ is open in } \partial E\setminus E^{k-1} \text{ such that } \sigma^k|_U \text{ is injective} \}.
   \end{equation*}

   We first show that $\mathcal{V}$ is an open cover of $\mathcal{G}_E$: Let $(a,m-n,b)\in\mathcal{G}_E$. Then $|a|\geq m$ and $|b|\geq n$, so $a\in\partial E \setminus E^{m-1}$ and $b\in \partial E \setminus E^{n-1}$. For every $k\in\mathbb{N}_0$ we have that $\sigma^k$ is a local homeomorphism, hence there exists a $U\in \mathcal{U}_m$ and a $V\in\mathcal{U}_n$ such that $a\in U$ and $b\in V$. Hence, $(a,m-n,b)\in Z(U,m,n,V)$ where $Z(U,m,n,V)\in\mathcal{V}$.

   Next we claim that each $Z(U,m,n,V)\in\mathcal{V}$ is a bisection. Indeed, let $(a_1,m-n,b_1),(a_2,m-n,b_2)\in Z(U,m,n,V)$ and suppose $r(a_1,m-n,b_1) = r(a_2,m-n,b_2)$. By definition of the range map we get that $a_1 = a_2$. Then we get that
   \begin{equation*}
    \sigma^n(b_1) = \sigma^m(a_1) = \sigma^m(a_2) = \sigma^n(b_2).
   \end{equation*}
   By injectivity of $\sigma^n|_V$ we get that $b_1 = b_2$, hence $r|_{Z(U,m,n,V)}$ is injective. Injectivity of $s|_{Z(U,m,n,V)}$ is similarly proven to be true.

   We can now show that any regular Borel measure on $\partial E$ that satisfies $\sigma^*\nu = e^{\beta} \nu$ on $\partial E \setminus E^0$ must also satisfy \cref{eqn:quasi-inv graph}. Fix $Z(U,m,n,V) \in \mathcal{V}$ and let $B\subset Z(U,m,n,V)$ be a Borel subset. It is clear that $B\cap\Phi^{-1}(\{k\})$ is nonempty if and only if $k=m-n$, and in this case $B\cap\Phi^{-1}(\{m-n\}) = B$. Also since $r|_B$ and $s|_B$ are injective, we have that $B = Z(r(B),m,n,s(B))$. By linearity of the pullback we have that $e^{\beta k}\nu = (\sigma^k)^*\nu$ on $\partial E \setminus E^{k-1}$ for every $k\in\mathbb{N}$, hence
   \begin{align*}
    e^{\beta m}\nu(r(B)) 
    & = (\sigma^m)^*\nu(r(B)) \\
    & = \nu(\sigma^m(r(B))) \\
    & = \nu(\sigma^n(s(B))) \\
    & = (\sigma^n)^*\nu(s(B)) \\
    & = e^{\beta n}\nu(r(B)).
   \end{align*}
   Note that equality two and four follows by injectivity of $\sigma^m|_U$ and $\sigma^n|_V$ and equality three also requires the fact that $Z(U,m,n,V)$ is a bisection. Hence, $\nu(r(B)) = e^{-\beta(m-n)}\nu(s(B))$ so \cref{eqn:quasi-inv graph} is satisfied, completing the proof.
\end{proof}

With these results in place we will no longer need to refer to the source and range maps of the graph groupoid, hence from now on, when we write $r$ and $s$ we will always refer to the range and source maps of the topological graph.

The isotropy subgroups of the graph groupoid has a particularly nice interpretation: they measure the period of cycles in the graph. Christensen, in \cites{Christensen2022}, introduces the following definition. Let $E$ be a second-countable topological graph and $a\in\partial E$. The \emph{periodicity group of $a$} is
\begin{equation*}
    Per(a) = \{ k - l \in \mathbb{Z} \ | \ k,l\in\mathbb{N}_0, \ k,l\leq |a| \text{ and } \sigma^k(a) = \sigma^l(a) \}.
\end{equation*}
It is clear that this is isomorphic to the isotropy subgroup $(\mathcal{G}_E)^a_a$. It should also be clear that $\Phi^{-1}(0) \cap (\mathcal{G}_E)^a_a = \{a\}$ for all $a\in\partial E$, hence we are in the situation where \cref{prop:Neshveyev's description} holds. Furthermore, it should be clear that $(\mathcal{G}_E)^a_a \subset \Phi^{-1}(0)$ if and only if $Per(a)$ is trivial.

\begin{proposition}\label{prop:KMS weights graph}$ $\newline
    Let $E$ be a second-countable topological graph and $\beta\in\mathbb{R}\setminus\{0\}$. Then there is an affine bijection between the set of KMS$_\beta$-weights(states) for the gauge-action on $C^*(E)$ and (probability measures in) $\mathcal{M}^\beta_{\text{quasi}}(E)$. If $\nu \in \mathcal{M}^\beta_{\text{quasi}}(E)$ (is a probability measure) we get a KMS$_\beta$-weight (state) for the gauge-action on $C^*(E)$, $\psi_\nu$ by the following equation
    \begin{equation*}
        \psi_\nu(f) = \int_{\partial E}  f(a,0,a) d\nu(a)
    \end{equation*}
    for all $f\in C_c(\mathcal{G}_E)$.
\end{proposition}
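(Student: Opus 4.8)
The plan is to assemble this statement by composing affine bijections already established in the paper, after first recognizing the gauge-action as the diagonal action attached to the cocycle $\Phi$.

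First I would observe that the gauge-action is \emph{literally} the dynamical system $\alpha^\Phi$ in the groupoid sense. Indeed, by definition $\alpha^\Phi_t = \gamma_{e^{it}}$, so for $f\in C_c(\mathcal{G}_E)$ and $(a,n,b)\in\mathcal{G}_E$ we have
\[
    \alpha^\Phi_t(f)(a,n,b) = \gamma_{e^{it}}(f)(a,n,b) = e^{itn}\,f(a,n,b) = e^{it\Phi(a,n,b)}\,f(a,n,b),
\]
which is exactly the formula defining $\alpha^c$ for the continuous groupoid homomorphism $c=\Phi$. Since $C^*(E) = C^*(\mathcal{G}_E)$, the KMS$_\beta$-weights (states) for the gauge-action are precisely the KMS$_\beta$-weights (states) for $\alpha^\Phi$ on $C^*(\mathcal{G}_E)$.

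Next I would verify the hypothesis of \cref{prop:Neshveyev's description}. As noted in the discussion preceding this statement, under the identification $\mathcal{G}_E^{(0)}\cong\partial E$ one has $\Phi^{-1}(0)\cap(\mathcal{G}_E)^a_a = \{a\}$ for every $a\in\partial E$. Applying \cref{prop:Neshveyev's description} in the states case, together with \cref{remark:Christensen's generalization of Neshveyev's thm} (which extends the statement to weights by dropping the probability normalization), yields an affine bijection between the KMS$_\beta$-weights (states) for $\alpha^\Phi$ and the quasi-invariant (probability) measures on $\partial E$ with Radon--Nikodym cocycle $e^{-\beta\Phi}$, that is, with $\Delta(e^{-\beta\Phi})$. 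I would then invoke \cref{lemma:invariant measures graph}, which gives $\Delta(e^{-\beta\Phi})\cong\mathcal{M}^\beta_{\text{quasi}}(E)$. The key point is that this identification is the identity on the underlying set of regular Borel measures on $\partial E$ — it only reformulates the quasi-invariance condition intrinsically via the backwards shift $\sigma$ — so it is affine and preserves the probability normalization. Composing the two bijections produces the claimed affine bijection.

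Finally I would pin down the explicit formula. Under $\mathcal{G}_E^{(0)}\cong\partial E$, the unit associated to $a\in\partial E$ is the element $(a,0,a)\in\mathcal{G}_E$, so $f\mapsto (a\mapsto f(a,0,a))$ is just restriction of $f$ to the unit space. Neshveyev's correspondence recovers the functional from the measure by integrating this restriction, with a possible correction coming from the isotropy; but the hypothesis $\Phi^{-1}(0)\cap(\mathcal{G}_E)^a_a = \{a\}$ — combined with the remark (valid since $\beta\neq 0$) that $(\mathcal{G}_E)^a_a\subset\Phi^{-1}(0)$ for $\nu$-a.e.\ $a$, hence $(\mathcal{G}_E)^a_a=\{a\}$ $\nu$-a.e.\ — forces the isotropy contribution to vanish, so the formula collapses to
\[
    \psi_\nu(f) = \int_{\partial E} f(a,0,a)\, d\nu(a), \qquad f\in C_c(\mathcal{G}_E).
\]
I expect this last step to be the main obstacle: one must carefully unwind Neshveyev's and Christensen's explicit descriptions to confirm that, in the present $\nu$-essentially principal situation, the state/weight is exactly integration of $f|_{\mathcal{G}_E^{(0)}}$ against $\nu$, with no residual terms arising from non-trivial periodicity groups $Per(a)$.
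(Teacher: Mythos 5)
Your proposal follows essentially the same route as the paper: verify the isotropy hypothesis via the periodicity groups, apply \cref{prop:Neshveyev's description} together with \cref{remark:Christensen's generalization of Neshveyev's thm}, and then identify $\Delta(e^{-\beta\Phi})$ with $\mathcal{M}^\beta_{\text{quasi}}(E)$ via \cref{lemma:invariant measures graph}. If anything, you are more careful than the paper's own proof, which does not explicitly justify the integral formula for $\psi_\nu$, whereas you correctly note that the $\nu$-a.e.\ triviality of the isotropy is what makes it collapse to integration of $f|_{\mathcal{G}_E^{(0)}}$ against $\nu$.
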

\begin{proof}$ $\newline
    By the remark that $\Phi^{-1}(0) \cap Per(a) = \{a\}$ for all $a \in \partial E$ we have that \cref{prop:Neshveyev's description} holds. Hence, by \cref{remark:Christensen's generalization of Neshveyev's thm}, there is an affine bijection between $\Delta(e^{-\beta\Phi})$ and the set of KMS$_\beta$-weights for the gauge-action on $C^*(E)$. By \cref{lemma:invariant measures graph} we have that $\Delta(e^{-\beta\Phi}) \cong \mathcal{M}^\beta_{\text{quasi}}(E)$, completing the proof.
\end{proof}

Since the only measures that give rise to KMS$_\beta$-weights (states) for the gauge action on $C^*(E)$ are the ones that measure the boundary paths with trivial periodicity groups, it would be nice to be able to describe when the periodicity groups are trivial. To do this we need to study the cycles in the topological graph.

For a second-countable topological graph $E$ we say that a path $c \in E^*$ is a \emph{cycle} if $|c| \geq 1$ and $r(c) = s(c)$. We denote the set of all cycles in $E$ by $\Omega E$. Every cycle $c\in\Omega E$ gives rise to a well-defined infinite path $c^\infty = cc\cdots$. We further say that a path $b\in E^*$ is an \emph{exit of $c$} if $s(b) = r(c)$ and $b$ is not of the form $ac$ for any path $a\in E^*$. We denote the set of exits of $c$ by $E^*_c$.

It is also interesting to study the infinite paths which are \emph{eventually cyclic}. To be precise we define a path $a \in E^\infty$ to be eventually cyclic if there exists a cycle $c\in\Omega E$ and an exit $b\in E^*_c$ such that $a = bc^\infty$. We denote the set of all eventually cyclic paths in $E$ by $\mathcal{C}E$.

\begin{proposition}\label{lemma:periodicity of eventually cyclic paths}$ $\newline
    Let $E$ be a topological graph and $a\in\partial E$. Then $Per(a) \neq \{0\}$ if and only if $a\in\mathcal{C}E$.
\end{proposition}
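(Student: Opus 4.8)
The plan is to prove the two implications separately, treating the statement as purely combinatorial, since no topology on $\partial E$ is actually needed. For the reverse implication I would assume $a = bc^\infty \in \mathcal{C}E$ with $c \in \Omega E$ and exit $b \in E^*_c$, set $m = |b|$ and $p = |c| \geq 1$, and compute $\sigma^{m+p}(a) = \sigma^p(\sigma^m(a)) = \sigma^p(c^\infty) = c^\infty = \sigma^m(a)$, using the identity $c^\infty = c\,c^\infty$. Since $a \in E^\infty$ has infinite length both shift indices are admissible, so $p = (m+p) - m$ is a nonzero element of $Per(a)$, i.e. $Per(a) \neq \{0\}$.

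For the forward implication I would first dispose of the finite case. If $a \in E^*_{\text{sng}}$, then for $0 \le l \le |a|$ the suffix $\sigma^l(a)$ has length $|a| - l$, so distinct values of $l$ give suffixes of distinct lengths; hence $\sigma^k(a) = \sigma^l(a)$ forces $k = l$ and $Per(a) = \{0\}$. Thus a nonzero period can occur only when $a \in E^\infty$, and I would assume this from now on. Picking $k > l \ge 0$ with $\sigma^k(a) = \sigma^l(a)$, I set $p = k - l \ge 1$ and $w = \sigma^l(a)$, so that $\sigma^p(w) = w$. Reading off edges, this says $w_{j+p} = w_j$ for every $j \ge 1$, whence $w = d^\infty$ for $d = a(l+1,l+p)$; the composability relation $s(w_p) = r(w_{p+1}) = r(w_1)$ then gives $r(d) = s(d)$, so $d \in \Omega E$ is a cycle of length $p$. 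This produces an eventually cyclic presentation $a = a(l)\,d^\infty$.

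It remains to replace $a(l)$ by a genuine exit. Among all presentations $a = b c^\infty$ with $c \in \Omega E$ (a nonempty collection by the previous step) I would choose one with $|b|$ minimal. Composability of $a = bc^\infty$ already gives $s(b) = r(c)$, so the only thing to verify is that $b$ is not of the form $b'c$. If it were, then $a = b'c\,c^\infty = b'c^\infty$ would be another such presentation with $|b'| = |b| - |c| < |b|$, contradicting minimality; this argument also covers $b \in E^0$ automatically, since a vertex can never be written as $b'c$. Hence $b \in E^*_c$ and $a = bc^\infty \in \mathcal{C}E$.

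The main obstacle is this final minimization step. Extracting eventual periodicity of $a$ from a nonzero element of $Per(a)$, and recognizing the periodic tail as $d^\infty$ for a cycle $d$, is routine once the shift is read edgewise; but the definition of $\mathcal{C}E$ demands that the prefix $b$ be a true exit rather than one ending in a copy of the cycle, and the minimal-length choice of $b$ is the clean device that guarantees exactly this.
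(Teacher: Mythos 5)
Your proof is correct and follows essentially the same route as the paper: build the cycle $c$ from a nonzero period of the shifted tail $\sigma^l(a)$, and use a minimality argument to ensure the prefix is a genuine exit (the paper phrases this as choosing $k$ and $l$ minimally, you as minimizing $|b|$ over all presentations $a=bc^\infty$ — the same device). Your explicit treatment of the finite-path case and of the edgewise identification $w=d^\infty$ just fills in details the paper leaves implicit.
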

\begin{proof}$ $\newline
    Suppose first that $Per(a) \neq \{0\}$. Then there exists integers $k,l\in\mathbb{N}_0$ with $k > l$ such that $\sigma^k(a) = \sigma^l(a)$. Define $d = \sigma^l(a)$, we want to show that the path segment $d(k - l)$ is a cycle (recall the notation from \cref{def:initial_path_segment}). Indeed,
    \begin{equation*}
        \sigma^{k-l}(d) = \sigma^{k-l}(\sigma^l(a)) = \sigma^k(a) = \sigma^l(a) = d,
    \end{equation*}
    showing that the path segment $d(k-l)$ is a cycle. To avoid cluttered notation we define $c = d(k-l)$ to be this cycle, and we will also define $b = a(l)$. It is then clear that we can write $a = bc^\infty$, and if $k$ and $l$ where chosen minimally we get that $b$ is an exit of $c$, hence we may conclude that $a\in\mathcal{C}E$.

    For the converse statement we now assume that $a\in\mathcal{C}E$. By definition there exists a cycle $c\in\Omega E$ and an exit $b\in E^*_c$ such that $a = bc^\infty$. Set $k = |b| + |c|$ and $l = |b|$. Since any cycle has a strictly positive length we get that $k > l$. It is clear that $\sigma^k(a) = \sigma^l(a)$, hence $k-l \in Per(a)$ showing that $Per(a) \neq \{0\}$. In fact $Per(a) = |c|\mathbb{Z}$.
\end{proof}

This result gives an easy proof of the following interesting fact about the graph groupoid.

\begin{corollary}\label{cor:graph groupoid principal}$ $\newline
    Let $E$ be a topological graph. Then the graph groupoid $\mathcal{G}_E$ is principal if and only if $E$ does not contain any cycles.
\end{corollary}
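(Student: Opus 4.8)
The plan is to reduce the statement to the already-established \cref{lemma:periodicity of eventually cyclic paths} via the standard characterisation of principality. Recall that a groupoid is \emph{principal} precisely when all of its isotropy subgroups are trivial, i.e.\ $(\mathcal{G}_E)_a^a = \{a\}$ for every unit $a$. Since the unit space of $\mathcal{G}_E$ is identified with $\partial E$, and since the excerpt already records the isomorphism $Per(a) \cong (\mathcal{G}_E)_a^a$, principality of $\mathcal{G}_E$ is equivalent to the condition that $Per(a) = \{0\}$ for every $a \in \partial E$.

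With this reformulation the corollary follows from a short chain of equivalences. First I would apply \cref{lemma:periodicity of eventually cyclic paths}, which says $Per(a) \neq \{0\}$ if and only if $a \in \mathcal{C}E$; contrapositively, $Per(a) = \{0\}$ for all $a \in \partial E$ if and only if $\mathcal{C}E = \emptyset$. Thus it remains only to show that $E$ contains no cycles (i.e.\ $\Omega E = \emptyset$) if and only if $\mathcal{C}E = \emptyset$.

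For the forward direction, if $\Omega E = \emptyset$ then $\mathcal{C}E = \emptyset$ is immediate, since by definition every eventually cyclic path is built from a cycle. For the converse I would argue contrapositively: given a cycle $c \in \Omega E$, I would exhibit an eventually cyclic boundary path. The natural candidate is $c^\infty = cc\cdots \in E^\infty \subset \partial E$. Taking $b = r(c)$ to be the length-zero (vertex) path, one checks that $s(b) = r(c)$ and that $b$ is not of the form $ac$ (as $|b| = 0 < |c|$), so $b \in E^*_c$ is a genuine exit and $c^\infty = b c^\infty \in \mathcal{C}E$. Hence $\mathcal{C}E \neq \emptyset$, completing the equivalence.

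The only point requiring any care, and the main (minor) obstacle, is the bookkeeping in the converse: verifying that $c^\infty$ genuinely lands in $\partial E$ (it does, being an infinite path) and that the trivial vertex path $r(c)$ qualifies as an exit under the definition of $E^*_c$. Everything else is a formal translation through \cref{lemma:periodicity of eventually cyclic paths}.
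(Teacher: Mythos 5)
Your proposal is correct and follows essentially the same route as the paper, which likewise deduces the corollary from \cref{lemma:periodicity of eventually cyclic paths} together with the identification of the periodicity groups with the isotropy subgroups. The only difference is that you spell out the equivalence $\Omega E = \emptyset \iff \mathcal{C}E = \emptyset$ (including the check that the vertex path $r(c)$ is an exit), which the paper leaves implicit.
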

\begin{proof}$ $\newline
    This follows by \cref{lemma:periodicity of eventually cyclic paths} and the observation that the periodicity groups are isomorphic to the isotropy subgroups of the graph groupoid.
\end{proof}

\section{Sub-invariant measures on a topological graph}\label{sec:sub-inv measures}

If our goal is to understand KMS$_\beta$-weights for the gauge action on the $C^*$-algebra of a second-countable topological graph, \cref{prop:KMS weights graph} might not be the best tool, since it will in general be quite difficult to get a nice description of the boundary path space. In this section we show that there is an affine bijection between the $\beta$-quasi-invariant measures on the boundary path space and \emph{$\beta$-sub-invariant measures} on the vertex space of a topological graph. These latter mentioned measures will in general be easier to get a concrete grasp of.

Recall that the range map of a topological graph $E$ naturally extends to give us a map $r:\partial E \rightarrow E^0$. By definition of the topology on $\partial E$ and \cref{lemma:topgraph3} it is clear that this map is not only continuous but also proper. Thus, any regular Borel measure on $\partial E$ can be pushforwarded to give a regular Borel measure on $E^0$. It is therefore of interest to study the regular Borel measures on the vertex space $E^0$ which correspond to the KMS$_\beta$-weights for the gauge-action on $C^*(E)$. Towards that endeavor we make the following definition.

\begin{definition}\label{def:sub-inv-measures}$ $\newline
    Let $E$ be a second-countable topological graph and $\beta \in \mathbb{R}$. We define the map $T:\mathcal{M}_{\text{reg}}(E^0)\rightarrow\mathcal{M}(E^0)$ by $T = r_*s^*$ where $r_*$ and $s^*$ are the pushforward and pullback induced by the range and source maps of the topological graph. We say that a regular Borel measure $\mu$ on the vertex space $E^0$ is \emph{$\beta$-sub-invariant} if $T\mu \leq e^\beta \mu$ on $E^0$ with equality on $ E^0_{\text{reg}}$. We denote the set of all $\beta$-sub-invariant measures by $\mathcal{M}_{\text{sub}}^\beta(E)$.
\end{definition}

Note that setting $\beta = 0$ in this definition recovers similar definitions by Christensen and Schafhauser used to study tracial-weights and -states, c.f.~\cites{Christensen2022,Schafhauser2018}. In fact, we want to check that some of their methods, mainly those by Schafhauser, generalizes to the study of KMS$_\beta$-weights.

The operator $T$ can be thought of as flow in the graph. For a very simple graph,
\begin{equation*}
    \xymatrix{
        \bullet_{u} & \bullet_{v} \ar[l]^e
    }
\end{equation*}
one computes easily that $T\delta_v = \delta_{u}$. Or for a slightly more sophisticated example,
\begin{equation*}
    \xymatrix{
        \bullet_{u} & \bullet_{v} \ar[l]^e \ar[r]_f & \bullet_{w}
    }
\end{equation*}
we have that $T\delta_v = \delta_u + \delta_w$.

\begin{lemma}\label{lemma:range induces quasi to sub}$ $\newline
    Let $E$ be a second-countable topological graph and $\beta\in\mathbb{R}$. The pushforward $r_*:\mathcal{M}^\beta_{\text{quasi}}(E)\rightarrow\mathcal{M}^\beta_{\text{sub}}(E)$ induced by the range map $r:\partial E \rightarrow E^0$ is well-defined.
\end{lemma}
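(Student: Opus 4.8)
The plan is to set $\mu = r_*\nu$ for an arbitrary $\nu \in \mathcal{M}^\beta_{\text{quasi}}(E)$ and verify the two requirements of \cref{def:sub-inv-measures}: that $\mu$ is a regular Borel measure, and that $T\mu \leq e^\beta\mu$ on $E^0$ with equality on $E^0_{\text{reg}}$. Regularity of $\mu$ is immediate, since $r\colon\partial E\to E^0$ is proper (noted just before \cref{def:sub-inv-measures}) and the pushforward of a regular measure along a proper map is regular. All of the content therefore lies in the sub-invariance estimate, which I would establish by computing $T\mu(A)$ for a Borel set $A\subseteq E^0$ and matching it against $\mu(A)$ using the $\beta$-quasi-invariance of $\nu$.

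First I would record the counting form of the pullback: for a local homeomorphism and a regular Borel measure, the integral identity of \cref{prop:pullback of measure 2} extends from $C_c$-functions to indicators of Borel sets by a standard approximation argument, giving $s^*\mu(B) = \int_{E^0}\#(s^{-1}(v)\cap B)\,d\mu(v)$. Applying this with $B = r^{-1}(A)$ and unwinding $T = r_*s^*$ (here $r,s$ are the topological-graph maps) yields $T\mu(A) = \int_{E^0}\#\{e\in E^1 : s(e)=v,\ r(e)\in A\}\,d\mu(v)$, and then pushing forward along $r\colon\partial E\to E^0$ through $\mu=r_*\nu$ gives
\begin{equation*}
    T\mu(A) = \int_{\partial E}\#\{e\in E^1 : s(e)=r(a),\ r(e)\in A\}\,d\nu(a).
\end{equation*}

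The key observation is a bijection: for each $a\in\partial E$, prepending an edge $e$ with $s(e)=r(a)$ produces a path $ea\in\partial E\setminus E^0$ with $\sigma(ea)=a$ and $r(ea)=r(e)$, and conversely every element of $\sigma^{-1}(a)$ arises this way via its first edge. (I would check the two cases $|a|=0$ and $|a|\geq 1$ separately, noting that $\sigma^{-1}(a)\subseteq\partial E\setminus E^0$ always.) Hence the integrand above equals $\#(\sigma^{-1}(a)\cap r^{-1}(A))$, and setting $B = r^{-1}(A)\cap(\partial E\setminus E^0)$ the counting form of the pullback along $\sigma$ gives $T\mu(A) = \sigma^*\nu(B)$. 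Since $B\subseteq\partial E\setminus E^0$, quasi-invariance (\cref{def:quasi inv measures}) turns this into $T\mu(A) = e^\beta\nu(B)$.

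Finally I would decompose $r^{-1}(A)=B\sqcup(A\cap E^0_{\text{sng}})$ by path length, using that the length-zero elements of $\partial E$ are exactly $E^0_{\text{sng}}$ and that $r(v)=v$ there; this gives $\mu(A)=\nu(B)+\nu(A\cap E^0_{\text{sng}})$ and therefore $T\mu(A)=e^\beta\mu(A)-e^\beta\nu(A\cap E^0_{\text{sng}})\leq e^\beta\mu(A)$, with equality precisely when $\nu(A\cap E^0_{\text{sng}})=0$, in particular whenever $A\subseteq E^0_{\text{reg}}$, so that $\mu\in\mathcal{M}^\beta_{\text{sub}}(E)$. I expect the main obstacles to be the careful bookkeeping of the length-zero (vertex) paths—so that the defect in the inequality is identified exactly with the singular-vertex mass—and justifying the Borel-level counting formula for the pullback; the edge-prepending bijection itself is the conceptual heart but is combinatorially clean.
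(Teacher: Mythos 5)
Your argument is correct and follows essentially the same route as the paper: both identify $T(r_*\nu)(A)$ with $e^{\beta}\nu\bigl(r^{-1}(A)\setminus E^0\bigr)$ via the correspondence between edges emitted from $r(a)$ and the $\sigma$-fibre over $a$, and both locate the defect in the inequality exactly at $\nu(A\cap E^0_{\text{sng}})$. The only difference is bookkeeping: the paper runs the computation through the sheaf-theoretic local form of the pullback (\cref{prop:pullback of measure}) over a disjointified countable cover of $E^1$, whereas you use the counting/transfer-operator form (\cref{prop:pullback of measure 2}), which the paper shows to be equivalent in \cref{prop:uniqueness of pullback}.
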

\begin{proof}$ $\newline
    Since the range map is proper by our discussion at the beginning of this section we get that the pushforward of a $\beta$-quasi-invariant measure $\nu$ is at least regular. Hence, we only need to check that $T(r_*\nu) \leq e^\beta (r_*\nu)$ on $E^0$ with equality on $ E^0_{\text{reg}}$.
    
    Let $\mathcal{U} = \{ U\subset E^1 \ | \ U \text{ is open in } E^1 \text{ and } s|_U \text{ is injective} \}$. Since the source map is a local homeomorphism, $\mathcal{U}$ is an open cover of $E^1$. Since $E^1$ is second-countable, $\mathcal{U}$ admits a countable sub-cover $\{U_i\}_{i\in\mathbb{N}}$. Fix a Borel subset $B\subset E^0$. Then
    \begin{align*}
        T(r_*\nu)(B)
        & = \sum_{i\in\mathbb{N}} r_*\nu(s(r^{-1}(B)\cap(U_i\setminus(U_1\cup...\cup U_{i-1})))) \\
        & = \sum_{i\in\mathbb{N}} \nu(Z(s(r^{-1}(B)\cap(U_i\setminus(U_1\cup...\cup U_{i-1}))))) \\
        & = \sum_{i\in\mathbb{N}} \nu(\sigma(Z(r^{-1}(B)\cap(U_i\setminus(U_1\cup...\cup U_{i-1}))))) \\
        & = e^{\beta} \sum_{i\in\mathbb{N}} \nu(Z(r^{-1}(B)\cap(U_i\setminus(U_1\cup...\cup U_{i-1})))) \\
        & = e^{\beta} \nu(Z(r^{-1}(B))) \\
        & \leq e^{\beta} \nu(Z(B)) \\
        & = e^{\beta} (r_*\nu)(B),
    \end{align*}
    where the inequality in the penultimate line is an equality if $B\subset E^0_{\text{reg}}$. Note that the transition from line three to line four follows by the definition of $\mathcal{M}^\beta_{\text{quasi}}(E)$ and the equation for the pullback given in \cref{prop:pullback of measure}. Thus, $r_*\nu \in \mathcal{M}^\beta_{\text{sub}}(E^0)$.
\end{proof}

To prove that the pushforward is injective we will use the fact that if we want to check if two measures are equal on a measurable space, it is enough to check that they are equal on a $\pi$-system that generates the $\sigma$-algebra and to check that the space can be covered by a countable increasing sequence of sets from the $\pi$-system, all of finite measure. See for example \cite{Cohn2013}*{Corollary 1.6.4} for a proof of this fact.

The $\pi$-system we will need in this context is the following.

\begin{lemma}\label{lemma:pi system}$ $\newline
    Let $E$ be a second-countable topological graph. Then the collection
    \begin{equation}\label{eqn:pi system}
        \mathcal{C} = \{ Z(V) \subset \partial E \ | \ V \subset E^n \text{ is a Borel subset and } n \in \mathbb{N}_0 \}
    \end{equation}
    is a $\pi$-system that generate the Borel $\sigma$-algebra of $\partial E$.
\end{lemma}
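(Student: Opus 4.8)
The plan is to verify the two required properties in turn, using throughout the elementary observation that if $V \subset E^n$ then, since $a(m) \in E^m$ for each admissible $m$ and the $E^m$ are pairwise disjoint, the condition $a(m) \in V$ can only hold for $m = n$. Hence
\begin{equation*}
    Z(V) = \{ a \in \partial E \ | \ |a| \geq n \text{ and } a(n) \in V \}.
\end{equation*}
Taking $V = E^n$ shows $Z(E^n) = \{a \in \partial E : |a| \geq n\}$, which is a basic open set (it equals $Z(E^n) \setminus Z(\emptyset)$) and hence open. By \cref{lemma:cont of (n)} the truncation $\pi_n : Z(E^n) \to E^n$, $a \mapsto a(n)$, is continuous, and one has $Z(V) = \pi_n^{-1}(V)$.

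To show $\mathcal{C}$ is a $\pi$-system I would take $V \subset E^n$ and $W \subset E^m$ Borel, say with $n \leq m$, and use that on $Z(E^m)$ the truncations are compatible: $\pi_n = q^m_n \circ \pi_m$, where $q^m_n : E^m \to E^n$ is the continuous projection appearing in the proof of \cref{lemma:cont of (n)}. Since $Z(W) \subset Z(E^m)$, this gives
\begin{equation*}
    Z(V) \cap Z(W) = \pi_m^{-1}\big( (q^m_n)^{-1}(V) \cap W \big) = Z\big( (q^m_n)^{-1}(V) \cap W \big),
\end{equation*}
and as $(q^m_n)^{-1}(V) \cap W$ is a Borel subset of $E^m$, this intersection again belongs to $\mathcal{C}$.

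For the generating claim, the inclusion $\sigma(\mathcal{C}) \subset \mathcal{B}(\partial E)$ is immediate: each $Z(V) = \pi_n^{-1}(V)$ is the preimage of a Borel set under a continuous map defined on the open (hence Borel) set $Z(E^n)$, so it is Borel. For the reverse inclusion I would first note that, since $\partial E$ is second-countable, it suffices to show that every basic open set $Z(U) \setminus Z(K)$ (with $U$ open and $K$ compact in $E^*$) lies in $\sigma(\mathcal{C})$; then every open set, being a countable union of such, lies in $\sigma(\mathcal{C})$, whence $\mathcal{B}(\partial E) \subset \sigma(\mathcal{C})$. The key identity here is that for any $S \subset E^*$ one has $Z(S) = \bigcup_{n \in \mathbb{N}_0} Z(S \cap E^n)$, valid because $a(n) \in S \iff a(n) \in S \cap E^n$. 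Decomposing the open set $U$ over the clopen pieces $E^n$ of $E^* = \bigsqcup_n E^n$ writes $Z(U)$ as a countable union of members of $\mathcal{C}$; since a compact $K$ meets only finitely many $E^n$, each in a compact (hence Borel) set, $Z(K)$ is a finite union of members of $\mathcal{C}$. Thus $Z(U) \setminus Z(K) \in \sigma(\mathcal{C})$.

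I expect the main obstacle to be this last generation step. The members of $\mathcal{C}$ are graded by a single path-length $n$, whereas the basic open sets $Z(U) \setminus Z(K)$ aggregate contributions from all lengths at once; the argument therefore rests on the length-graded decomposition $Z(S) = \bigcup_n Z(S \cap E^n)$, on the fact that compact subsets of the disjoint union $E^*$ are supported on finitely many $E^n$, and on second-countability to reduce arbitrary open sets to countable unions of basic ones.
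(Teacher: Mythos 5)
Your argument is correct and follows essentially the same route as the paper: the $\pi$-system step via $Z(V)\cap Z(W)=Z\bigl((q^m_n)^{-1}(V)\cap W\bigr)$ is the same as the paper's $Z(U)\cap Z(V)=Z(\{a\in U \mid a(m)\in V\})$ with \cref{lemma:cont of (n)} supplying measurability, and the generation step rests on the same length-graded decomposition $Z(S)=\bigcup_n Z(S\cap E^n)$ applied to $U$ and $K$ (your explicit appeal to second-countability to reduce open sets to countable unions of basic ones, and the observation that $\pi_n^{-1}(U)=Z(U)$ gives continuity of the truncation on $Z(E^n)$, are just the details the paper leaves implicit).
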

\begin{proof}$ $\newline
    Let $U\subset E^*$ be open and $K\subset E^*$ be compact. It is enough to show that $Z(U) \setminus Z(K)$ is in the $\sigma$-algebra generated by $\mathcal{C}$ to conclude that $\mathcal{C}$ generates the Borel $\sigma$-algebra of $\partial E$. For $n\in\mathbb{N}_0$ we define $U_n = U \cap E^n$. The following equality shows that $Z(U)$ is in the $\sigma$-algebra generated by $\mathcal{C}$: $Z(U) = Z(\bigcup_{n\in\mathbb{N}_0} U_n) = \bigcup_{n\in\mathbb{N}_0} Z(U_n)$. We can similarly conclude that $Z(K)$ is in the $\sigma$-algebra generated by $\mathcal{C}$. Hence, $\mathcal{C}$ generates the Borel $\sigma$-algebra of $\partial E$.

    Now we need to show that $\mathcal{C}$ is indeed a $\pi$-system. Let $U \subset E^n$ and $V \subset E^m$ be Borel subsets. We may assume that $n \geq m$. Define $W = \{ a \in U \ | \ a(m) \in V \}$. By \cref{lemma:cont of (n)} we have that $W$ is a Borel subset of $E^n$ so $Z(W) \in \mathcal{C}$. Clearly $Z(U) \cap Z(V) = Z(W)$ showing that $\mathcal{C}$ is a $\pi$-system.
\end{proof}

\begin{lemma}\label{lemma:pushforward is injective}$ $\newline
    Let $E$ be a second-countable topological graph and $\beta\in\mathbb{R}$. The pushforward $r_*:\mathcal{M}^\beta_{\text{quasi}}(E)\rightarrow\mathcal{M}^\beta_{\text{sub}}(E)$ induced by the range map $r:\partial E \rightarrow E^0$ is injective.
\end{lemma}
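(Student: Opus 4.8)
The plan is to show that the value $\nu(Z(V))$ of a $\beta$-quasi-invariant measure on every element $Z(V)$ of the $\pi$-system $\mathcal{C}$ from \cref{lemma:pi system} is completely determined by the pushforward $r_*\nu$. Once this is established, any two measures $\nu_1,\nu_2\in\mathcal{M}^\beta_{\text{quasi}}(E)$ with $r_*\nu_1 = r_*\nu_2$ will agree on $\mathcal{C}$, and invoking \cite{Cohn2013}*{Corollary 1.6.4} together with the $\pi$-system property will upgrade this to $\nu_1 = \nu_2$, giving injectivity.

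First I would fix a Borel set $V\subset E^n$ and reduce to the case where $s|_V$ is injective. Since the source map $s\colon E^n\to E^0$ is a local homeomorphism between second-countable spaces, I can cover $E^n$ by countably many open sets on which $s$ is injective, disjointify them into Borel pieces $\{W_i\}_{i\in\mathbb{N}}$, and write $Z(V)=\bigsqcup_{i} Z(V\cap W_i)$ as a disjoint union; countable additivity then reduces everything to the injective case. For $V$ with $s|_V$ injective I would prove that $\sigma^n|_{Z(V)}$ is injective with image exactly $r^{-1}(s(V)) = Z(s(V))$, the key point being that, because $s|_V$ is injective, the initial length-$n$ segment of a path in $Z(V)$ is recovered from the source vertex of its tail. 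Applying the relation $e^{\beta n}\nu = (\sigma^n)^*\nu$ on $\partial E\setminus E^{n-1}$ (established inside the proof of \cref{lemma:invariant measures graph}) together with the pullback formula from \cref{prop:pullback of measure} then yields
\begin{equation*}
    \nu(Z(V)) = e^{-\beta n}\,\nu\bigl(\sigma^n(Z(V))\bigr) = e^{-\beta n}\,\nu\bigl(r^{-1}(s(V))\bigr) = e^{-\beta n}\, r_*\nu\bigl(s(V)\bigr),
\end{equation*}
which depends on $\nu$ only through $r_*\nu$; here $s(V)$ is Borel because $s$ restricts to a Borel isomorphism on each injective piece. Summing over the pieces $W_i$ gives the claim for arbitrary Borel $V$.

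To apply the $\pi$-system criterion I still need a countable increasing family in $\mathcal{C}$, of finite measure, covering $\partial E$. Here I would take a countable increasing exhaustion $E^0 = \bigcup_m O_m$ by relatively compact open sets, possible since $E^0$ is second-countable locally compact Hausdorff and hence $\sigma$-compact. The sets $Z(O_m)$ lie in $\mathcal{C}$ (take $n=0$), increase to $\partial E$, and satisfy $\nu(Z(O_m)) = r_*\nu(O_m)\le r_*\nu(\overline{O_m})<\infty$ by regularity of $r_*\nu$. Since $\nu_1$ and $\nu_2$ agree on $\mathcal{C}$ and are finite on this exhaustion, \cite{Cohn2013}*{Corollary 1.6.4} forces $\nu_1=\nu_2$ on the whole Borel $\sigma$-algebra of $\partial E$.

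The main obstacle, and the part requiring the most care, is the reduction to sets on which $s$ is injective together with the accompanying verification that $\sigma^n$ restricts to a bijection from $Z(V)$ onto $r^{-1}(s(V))$ and that $s(V)$ remains Borel; everything else is a bookkeeping application of quasi-invariance, the pushforward/pullback formulas, and the $\pi$-system lemma.
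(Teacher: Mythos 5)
Your argument is correct and is essentially the paper's own proof: both reduce to sets $Z(V)$ with $V\subset E^n$ on which the $n$-fold shift (equivalently, the source map on $E^n$) is injective via a countable disjointification, compute $\nu(Z(V)) = e^{-\beta n}\,r_*\nu(\sigma^n(V))$ from quasi-invariance and the pullback formula, and finish with the same $\pi$-system criterion and the same exhaustion of $\partial E$ by $Z(W)$ for relatively compact $W\subset E^0$. The only cosmetic difference is that you phrase the conclusion as ``$\nu(Z(V))$ is determined by $r_*\nu$'' rather than running the chain of equalities for $\nu_1$ and $\nu_2$ in parallel.
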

\begin{proof}$ $\newline
    Fix $\nu_1,\nu_2\in\mathcal{M}^\beta_{\text{quasi}}(E)$ such that $r_*\nu_1 = r_*\nu_2$. We want to check that $\nu_1 = \nu_2$ on the $\pi$-system $\mathcal{C}$ in the previous Lemma given by \cref{eqn:pi system}. To that effort, suppose that $V\subset E^n$ is a Borel subset and let
    \begin{equation*}
        \mathcal{U} = \{ U \subset E^n \ | \ U \text{ is open in } E^n \text{ and } \sigma^n|_U \text{ is injective} \}.
    \end{equation*}
    This is an open cover of $V$ since $\sigma^n$ is a local homeomorphism, furthermore since $E^n$ is second-countable we have that $\mathcal{U}$ admits a countable subcover, $\{U_i\}_{i\in\mathbb{N}}$. By partitioning $V$ with the $U_i$'s it is enough to check that $\nu_1(Z(U_i)) = \nu_2(Z(U_i))$ for each $i\in\mathbb{N}$ to conclude that $\nu_1(Z(V)) = \nu_2(Z(V))$. By definition of being $\beta$-quasi-invariant measures we get that
    \begin{align*}
        \nu_1(Z(U_i)) 
        & = e^{-\beta n} \nu_1(\sigma^n(Z(U_i))) \\
        & = e^{-\beta n} \nu_1(Z(\sigma^n(U_i))) \\
        & = e^{-\beta n} (r_*\nu_1)(\sigma^n(U_i)) \\
        & = e^{-\beta n} (r_*\nu_2)(\sigma^n(U_i)) \\
        & = \nu_2(Z(U_i)).
    \end{align*}
    Hence, $\nu_1 = \nu_2$ on the $\pi$-system $\mathcal{C}$. To conclude that $\nu_1 = \nu_2$ on the Borel $\sigma$-algebra of $\partial E$ we need to show that we can cover $\partial E$ with a sequence of increasing sets from the $\pi$-system $\mathcal{C}$, with each set having finite measure. We can do this as follows: consider the collection
    \begin{equation*}
        \mathcal{V} = \{ V\subset E^0 \ | \ V \text{ is relatively compact in } E^0 \}.
    \end{equation*}
    Since $E^0$ is locally compact Hausdorff this is an open cover of $E^0$, and since $E^0$ is second-countable $\mathcal{V}$ admits a countable subcover $\{V_i\}_{i\in\mathbb{N}}$. Define $W_i = \bigcup_{j=1}^{i}V_j$. Then each $W_i$ is relatively compact in $E^0$ and $W_i\subset W_{i+1}$, so we have that the sequence $\{Z(W_i)\}_{i\in\mathbb{N}}$ is an increasing sequence such that $\bigcup_{i\in\mathbb{N}} Z(W_i) = Z\left( \bigcup_{i\in\mathbb{N}} W_i \right) = Z(E^0) = \partial E$. Finally, we have that $\nu_1(Z(W_i)) \leq \nu_1(Z(\overline{W_i})) < \infty$ since $\nu_1$ is a regular measure and $Z(\overline{W_i})$ is compact by \cref{lemma:topgraph3}. Hence, $\nu_1 = \nu_2$, which proves that $r_*$ is injective.
\end{proof}

Proving surjectivity of the pushforward is not as straight forward as proving injectivity. The $\beta = 0$ case is done by Schafhauser, c.f.~\cite{Schafhauser2018}*{Proposition 4.4}, and his proof generalizes to all values of $\beta$ quite easily. To use his method we need to define the \emph{$n$-th boundary path space} $\partial E_n$, which for a topological graph $E$ is defined to be the following disjoint union: $\partial E_n = E^0_{\text{sng}}\sqcup \dots \sqcup E^{n-1}_{\text{sng}} \sqcup E^n$. For subsets $S\subset E^0 \sqcup ... \sqcup E^n$ we define the set $Z_n(S) = \{ a \in \partial E_n \ | \ a(k) \in S \text{ for some } k \text{ satisfying } 0\leq k \leq |a| \}$. The $n$-th boundary path space becomes a locally compact Hausdorff space with the topology generated by sets $Z_n(U)\setminus Z_n(K)$ where $U\subset E^0 \sqcup ... \sqcup E^n$ is an open subset and $K\subset E^0 \sqcup ... \sqcup E^n$ is compact. The proof of this is completely the same as the one for $\partial E$. Of course if $E$ is second-countable, $\partial E_n$ becomes second-countable as well. Note that for $n=0$ we have that $\partial E_0 = E^0$.

For $n \geq 1$ we define maps $\rho_n:\partial E_n \rightarrow \partial E_{n-1}$ by
\begin{equation*}
    \rho_n(a) =
    \left\{
    \begin{matrix*}[l]
        a(n-1) , & |a| = n, \\
        a , & |a| < n,
    \end{matrix*}
    \right.
\end{equation*}
as well as maps $\rho_{n,\infty}:\partial E \rightarrow \partial E_n$ by
\begin{equation*}
    \rho_{n,\infty}(a) =
    \left\{
    \begin{matrix*}[l]
        a(n) , & |a| > n, \\
        a , & |a| \leq n.
    \end{matrix*}
    \right.
\end{equation*}
Note that $\rho_{0,\infty}$ is the range map $r:\partial E \rightarrow E^0$. We may also apply the backwards shift map to the $n$-th boundary path space, and it will still be a local homeomorphism in this case.

\begin{lemma}{\cite{Schafhauser2018}*{Proposition 3.3}}\label{lemma:limit}$ $\newline
    Let $E$ be a topological graph. Then the maps $\rho_n$ and $\rho_{n,\infty}$ are continuous, proper and surjective. We further have that the maps $\rho_{n,\infty}$ induces a homeomorphism $\partial E \rightarrow \lim(\partial E_n, \rho_n)$. \qed
\end{lemma}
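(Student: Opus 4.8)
The plan is to first establish the three elementary properties (continuity, properness, surjectivity) for both families of maps by a direct computation with the generating sets $Z_n(S)$, and only then to assemble the homeomorphism. The computational heart of everything is the pair of identities
\[
\rho_{n,\infty}^{-1}(Z_n(S)) = Z(S), \qquad \rho_n^{-1}(Z_{n-1}(S)) = Z_n(S),
\]
valid for any $S\subset E^0\sqcup\cdots\sqcup E^n$ (respectively $S\subset E^0\sqcup\cdots\sqcup E^{n-1}$). To verify the first I would note that $\rho_{n,\infty}(a)(k)=a(k)$ for all $k\leq\min(n,|a|)=|\rho_{n,\infty}(a)|$; since every element of $S$ has length at most $n$, the condition $a(k)\in S$ already forces $k\leq n$, so the defining condition for $\rho_{n,\infty}(a)\in Z_n(S)$ and the one for $a\in Z(S)$ coincide. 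The second identity is proved the same way, and \cref{lemma:cont of (n)} guarantees that the relevant truncations are Borel/continuous where needed.

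Granting these identities, continuity is immediate: a basic open set $Z_n(U)\setminus Z_n(K)$ of $\partial E_n$ pulls back under $\rho_{n,\infty}$ to $Z(U)\setminus Z(K)$, which is basic open in $\partial E$, and similarly for $\rho_n$. For properness I would use that the range map $a\mapsto a(0)$ is continuous, so for compact $C\subset\partial E_n$ the image $r(C)\subset E^0$ is compact with $C\subset Z_n(r(C))$; hence $\rho_{n,\infty}^{-1}(C)\subset\rho_{n,\infty}^{-1}(Z_n(r(C)))=Z(r(C))$, which is compact by \cref{lemma:topgraph3}. Since $\rho_{n,\infty}^{-1}(C)$ is closed (the preimage of a compact, hence closed, set in the Hausdorff space $\partial E_n$), it is a closed subset of a compact set and therefore compact. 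The same argument, invoking the analogue of \cref{lemma:topgraph3} for $\partial E_n$, disposes of $\rho_n$.

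Surjectivity I would obtain from the explicit form of the maps. A path of length $<n-1$, or a singular path of length $n-1$, lifts to itself under $\rho_n$; a path $b\in E^{n-1}$ with regular source receives an incoming edge $e$ (regular vertices are ranges of edges by the defining condition $r(r^{-1}(U))=U$), so $be\in E^n$ satisfies $\rho_n(be)=b$. For $\rho_{n,\infty}$ this idea is iterated: beginning from $b\in E^n$ with regular source, one repeatedly appends incoming edges, a process that either terminates at a singular vertex, yielding a finite singular path in $\partial E$, or continues forever, yielding an infinite path in $E^\infty\subset\partial E$; in both cases the $n$-th truncation is $b$.

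Finally, the compatibility $\rho_n\circ\rho_{n,\infty}=\rho_{n-1,\infty}$ (a short case check on the length of $a$) produces a continuous map $\rho_\infty:\partial E\to\lim(\partial E_n,\rho_n)$. It is injective because a boundary path is determined by its truncations, and surjective because a compatible sequence $(x_n)$ either satisfies $|x_n|=n$ for every $n$, assembling into an infinite path, or has $|x_N|<N$ for some $N$, after which compatibility forces it to stabilize and produce a finite singular path. The one genuinely non-formal point, and the step I expect to be the main obstacle, is continuity of the inverse, i.e. openness of $\rho_\infty$. I would settle this by a base comparison: the topology transported from the inverse limit is generated by the pullbacks $\rho_{n,\infty}^{-1}(W)$, and using the key identity together with the facts that an open $U\subset E^*$ splits as $\bigsqcup_n(U\cap E^n)$ and a compact $K\subset E^*$ meets only finitely many levels $E^n$, one rewrites each basic set $Z(U)\setminus Z(K)$ of $\partial E$ as a countable union of sets $Z(U\cap E^n)=\rho_{n,\infty}^{-1}(Z_n(U\cap E^n))$ intersected with the finitely many complements of the closed pullbacks $\rho_{n,\infty}^{-1}(Z_n(K\cap E^n))$. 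Thus every basic open of $\partial E$ lies in the transported topology, the two topologies coincide, and $\rho_\infty$ is a homeomorphism.
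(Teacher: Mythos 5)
Your argument is correct and complete; the paper itself gives no proof of this lemma, deferring entirely to the cited Proposition 3.3 of \cite{Schafhauser2018}, and your proof follows essentially the same route as that source (the preimage identity $\rho_{n,\infty}^{-1}(Z_n(S)) = Z(S)$ for cylinder sets, properness via containment in $Z(r(C))$ together with \cref{lemma:topgraph3}, surjectivity by extending paths at regular sources, and the base comparison for the inverse-limit homeomorphism). Nothing further is needed.
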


\begin{lemma}\label{lemma:surjectivity of pushforward}$ $\newline
    Let $E$ be a second-countable topological graph and $\beta\in\mathbb{R}$. The pushforward $r_*:\mathcal{M}^\beta_{\text{quasi}}(E)\rightarrow\mathcal{M}^\beta_{\text{sub}}(E)$ induced by the range map $r:\partial E \rightarrow E^0$ is surjective.
\end{lemma}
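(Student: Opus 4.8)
The plan is to realize $\partial E$ as the inverse limit $\lim(\partial E_n,\rho_n)$ afforded by \cref{lemma:limit} and to build the preimage measure one level at a time. Starting from a $\beta$-sub-invariant measure $\mu\in\mathcal{M}^\beta_{\text{sub}}(E)$, I would construct a compatible family of regular Borel measures $\mu_n$ on $\partial E_n$ with $\mu_0=\mu$ and $(\rho_n)_*\mu_n=\mu_{n-1}$, and then pass to the limit to obtain a measure $\nu$ on $\partial E$. Since $\rho_{0,\infty}=r$, the resulting $\nu$ will satisfy $r_*\nu=\mu$ automatically, and the remaining task is to check $\nu\in\mathcal{M}^\beta_{\text{quasi}}(E)$, which yields surjectivity.

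The organizing object is the \emph{trapped mass} $c=\mu-e^{-\beta}T\mu$ on $E^0$. Here the hypothesis $\mu\in\mathcal{M}^\beta_{\text{sub}}(E)$ is used exactly as recorded in \cref{def:sub-inv-measures}: the inequality $T\mu\le e^\beta\mu$ guarantees $c\geq 0$, so $c$ is a genuine regular Borel measure (it is dominated by $\mu$, hence finite on compacts), and the equality $T\mu=e^\beta\mu$ on $E^0_{\text{reg}}$ guarantees that $c$ is supported on $E^0_{\text{sng}}$. On $\partial E_n=E^0_{\text{sng}}\sqcup\dots\sqcup E^{n-1}_{\text{sng}}\sqcup E^n$ I would then declare $\mu_n$ to be $e^{-\beta j}s^*c$ on each singular level $E^j_{\text{sng}}$ for $0\le j\le n-1$ and $e^{-\beta n}s^*\mu$ on the top level $E^n$, where $s$ denotes the iterated source map, which is a local homeomorphism so that these pullbacks are defined via \cref{prop:pullback of measure}. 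Because $c$ is supported on the singular vertices, $s^*c$ is automatically supported on the singular level $E^j_{\text{sng}}=s^{-1}(E^0_{\text{sng}})$, so the formula is consistent with the decomposition of $\partial E_n$. Regularity of each $\mu_n$ is checked via a compact exhaustion of $\partial E_n$, exactly as in the proof of \cref{lemma:pushforward is injective}.

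The key verification is the compatibility $(\rho_n)_*\mu_n=\mu_{n-1}$. The map $\rho_n$ fixes the singular levels $E^j_{\text{sng}}$ with $j\le n-2$, where the two formulas agree verbatim; on the top level it restricts to the truncation $a\mapsto a(n-1)$ collapsing $E^n$ onto $E^{n-1}$, while the new singular level $E^{n-1}_{\text{sng}}$ of $\partial E_n$ injects into $E^{n-1}$. Thus on a Borel set at level $n-1$ the identity $(\rho_n)_*\mu_n=\mu_{n-1}$ reduces to $s^*\mu=s^*c+e^{-\beta}\tau_*(s^*\mu)$ on $E^{n-1}$, where $\tau$ is the truncation; since $s^*c=s^*\mu-e^{-\beta}s^*(T\mu)$ by linearity of the pullback, this is in turn equivalent to the single identity $\tau_*(s^*\mu)=s^*(T\mu)$, i.e. the truncation-pushforward of the source-pullback equals the source-pullback of the flow operator $T=r_*s^*$. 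I would prove this identity by covering the relevant path spaces by open sets on which the source map is injective, so that \cref{prop:pullback of measure} evaluates $s^*(\,\cdot\,)$ as $\mu(s(\,\cdot\,))$ on Borel subsets of such sets, and then summing over a countable subcover, exactly in the style of \cref{lemma:range induces quasi to sub} and \cref{lemma:pushforward is injective}. This step is where sub-invariance is genuinely consumed: the splitting of the level-$(n-1)$ mass into a part that fans out over extending edges (captured by $e^{-\beta}T$) and a part trapped at singular sources (captured by $c$) is precisely the equality/inequality pattern of \cref{def:sub-inv-measures}.

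The main obstacle is assembling the compatible family $\{\mu_n\}$ into a single regular Borel measure $\nu$ on $\partial E\cong\lim(\partial E_n,\rho_n)$. I would obtain $\nu$ by defining it on the $\pi$-system $\mathcal{C}$ of \cref{lemma:pi system} through $\nu(Z(V))=\mu_n(V)$ for Borel $V\subset E^n$ and extending by Carath\'eodory; the compatibility $(\rho_n)_*\mu_n=\mu_{n-1}$ is exactly what makes this assignment independent of the level $n$ chosen to represent $Z(V)$ and finitely additive on $\mathcal{C}$. Countable additivity, together with the required regularity, follows from the properness of the maps $\rho_{n,\infty}$ in \cref{lemma:limit} (which transports tightness down the tower) and from the $\sigma$-compact exhaustion $\partial E=\bigcup_i Z(W_i)$ by the compact sets of \cref{lemma:topgraph3}, along which each $\mu_n$ is finite, so that the standard uniqueness-and-extension machinery used in \cref{lemma:pushforward is injective} applies. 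Finally, $r_*\nu=\mu$ is immediate from $\rho_{0,\infty}=r$ and $\mu_0=\mu$, while $\sigma^*\nu=e^\beta\nu$ on $\partial E\setminus E^0$ follows from the built-in scaling factors $e^{-\beta n}$ by checking the relation on open sets where $\sigma$ is injective, as in \cref{lemma:invariant measures graph}. Hence $\nu\in\mathcal{M}^\beta_{\text{quasi}}(E)$ and $r_*$ is surjective.
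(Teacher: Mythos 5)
Your proposal is correct and follows essentially the same route as the paper: it builds the same tower of measures $\mu_n$ on the spaces $\partial E_n$ and passes to the inverse limit of \cref{lemma:limit}. Your closed-form description of $\mu_n$ (namely $e^{-\beta j}s^*c$ on $E^j_{\text{sng}}$ and $e^{-\beta n}s^*\mu$ on $E^n$, with $c=\mu-e^{-\beta}T\mu$) is precisely the unrolled version of the paper's recursion $\mu_n = e^{-\beta}\sigma^*\mu_{n-1} + \mu|_{E^0_{\text{sng}}} - e^{-\beta}(T\mu)|_{E^0_{\text{sng}}}$, and your reduction of the compatibility $(\rho_n)_*\mu_n=\mu_{n-1}$ to the single identity $\tau_*(s^*\mu)=s^*(T\mu)$ is a clean repackaging of the paper's $\pi$-system computation.
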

\begin{proof}$ $\newline
    Let $\mu\in\mathcal{M}^\beta_{\text{sub}}(E^0)$. We want to define regular Borel measures $\mu_n$ on $\partial E_n$ for each $n\in\mathbb{N}_0$ such that $(\rho_n)_*\mu_n = \mu_{n-1}$. We will then use universality of the limit to show that there exists a unique regular Borel measure $\nu$ on $\partial E$ such that $r_*\nu = \mu$.
    
    First we set $\mu_0 = \mu$ as $\partial E_0 = E^0$. We recursively define $\mu_n$ by the equation
    \begin{equation*}
        \mu_n = e^{-\beta} \sigma^*\mu_{n-1} + \mu|_{E^0_{\text{sng}}} - e^{-\beta}(T\mu)|_{E^0_{\text{sng}}}.
    \end{equation*}
    For Borel subsets $B\subset \partial E_n$ we read this equation as follows:
    \begin{equation}\label{eqn:recursive measures}
        \mu_n(B) = e^{-\beta}(\sigma^*\mu_{n-1})(B \setminus E^0) + \mu(B\cap E^0_{\text{sng}}) - e^{-\beta}(T\mu)(B\cap E^0_{\text{sng}}).
    \end{equation}
    Since the backwards shift map is a local homeomorphism we get that the pullback $\sigma^*\mu_{n-1}$ becomes a regular Borel measure on $\partial E_n \setminus E^0$. Furthermore, since $\mu\in\mathcal{M}^\beta_{\text{sub}}(E^0)$ we have that $\mu - e^{-\beta}(T\mu)$ is a regular Borel measure by the inequality $T\mu\leq e^\beta\mu$. Hence, $\mu_n$ is a regular Borel measure.

    We will prove by induction that $(\rho_n)_*\mu_n = \mu_{n-1}$. For $n=1$ we have that $\rho_1: \partial E_1\rightarrow E^0$ is the map $r\sqcup id_{E^0}:E^1\sqcup E^0_{\text{sng}} \rightarrow E^0$ and $\sigma:\partial E_1 \setminus E^0 \rightarrow E^0$ is the source map $s:E^1\rightarrow E^0$. Hence,
    \begin{align*}
        (\rho_1)_*\mu_1
        & = e^{-\beta}(r_*s^*\mu) + \mu|_{E^0_{\text{sng}}} - e^{-\beta}(T\mu)|_{E^0_{\text{sng}}} \\
        & = e^{-\beta}(T\mu) + \mu|_{E^0_{\text{sng}}} - e^{-\beta}(T\mu)|_{E^0_{\text{sng}}} \\
        & = e^{-\beta}(T\mu)|_{E^0_{\text{reg}}} + \mu|_{E^0_{\text{sng}}} \\
        & = \mu|_{E^0_{\text{reg}}} + \mu|_{E^0_{\text{sng}}} \\
        & = \mu \\
        & = \mu_0.
    \end{align*}
    So assume that $(\rho_k)_*\mu_k = \mu_{k-1}$ for all $k\leq n$. To show that $(\rho_{n+1})_*\mu_{n+1} = \mu_{n}$ we will consider the collection $\mathcal{C}_n = \{ Z_n(V) \subset \partial E^n \ | \ V \subset E^k \text{ is a Borel subset and } 0 \leq k \leq n \}$. Just as in \cref{lemma:pi system} this is a $\pi$-system that generates the Borel $\sigma$-algebra of $\partial E_n$. The case with $V\subset E^0$ is special, so we will tackle this last.
    
    Fix a Borel subset $V\subset E^k$ where $1 \leq k \leq n$. Note that for the set $Z_n(V)$ \cref{eqn:recursive measures} simplifies to
    \begin{equation*}\label{eqn:quasi-inv}
        \mu_{n}(Z_n(V)) = e^{-\beta}(\sigma^*\mu_{n-1})(Z_n(V)).
    \end{equation*}
    Consider the family $\mathcal{U}_k = \{ U \subset E^k \ | \ U \text{ is open in } E^k \text{ and } \sigma|_U \text{ is injective} \}$. Since $\sigma$ is a local homeomorphism on $E^k$ we get that $\mathcal{U}_k$ is an open cover of $E^k$, in particular it is an open cover of $V$. Since $E^k$ is second-countable we get that $\mathcal{U}_k$ admits a countable subcover of $V$, denote it by $\{U_i\}_{i\in\mathbb{N}}$. By partitioning $V$ we get that $(\rho_{n+1})_*\mu_{n+1}(Z_n(V)) = \mu_{n}(Z_n(V))$ if $(\rho_{n+1})_*\mu_{n+1}(Z_n(U_i)) = \mu_{n}(Z_n(U_i))$ for all $i\in\mathbb{N}$. So we compute:
    \begin{align*}
        (\rho_{n+1})_*\mu_{n+1}(Z_n(U_i))
        & = \mu_{n+1}(\rho_{n+1}^{-1}(Z_n(U_i))) \\
        & = \mu_{n+1}((Z_{n+1}(U_i))) \\
        & = e^{-\beta}(\sigma^*\mu_{n})(Z_{n+1}(U_i)) \\
        & = e^{-\beta}\mu_{n}(\sigma(Z_{n+1}(U_i))) \\
        & = e^{-\beta}\mu_{n}(Z_{n}(\sigma(U_i))) \\
        & = e^{-\beta}\mu_{n}(\rho_{n}^{-1}(Z_{n-1}(\sigma(U_i)))) \\
        & = e^{-\beta}(\rho_n)_*\mu_{n}(Z_{n-1}(\sigma(U_i))) \\
        & = e^{-\beta} \mu_{n-1}(Z_{n-1}(\sigma(U_i))) \\
        & = e^{-\beta} \mu_{n-1}(\sigma(Z_{n}(U_i))) \\
        & = e^{-\beta} (\sigma^*\mu_{n-1})(Z_{n}(U_i)) \\
        & = \mu_n(Z_n(U_i)).
    \end{align*}
    This calculation might seem tedious, but it is nice to exemplify \emph{how} these types of calculations are done, as they aren't the easiest to manoeuvre.

    When $V\subset E^0$ we have that $Z_n(V)\setminus E^0 = Z_n(r^{-1}(V))$. By partitioning $r^{-1}(V)$ with sets $U\in\mathcal{U}_1$ we may assume that $\sigma|_{r^{-1}(V)}$ is injective. The same calculation as above can now be carried out, where we also have to drag along the terms involving $\mu|_{E^0_{\text{sng}}}$ and $(T\mu)|_{E^0_{\text{sng}}}$. So we have that $(\rho_{n+1})_*\mu_{n+1} = \mu_{n}$ on the $\pi$-system $\mathcal{C}_n$. Making the same argument as we did in \cref{lemma:pushforward is injective}, we get that $(\rho_{n+1})_*\mu_{n+1} = \mu_{n}$ on $\partial E_n$.

    The rest of the proof follows in exactly the same manner as in \cite{Schafhauser2018}*{Proposition 4.4}.
\end{proof}

\begin{proposition}\label{prop:pushforward bijective}$ $\newline
    Let $E$ be a second-countable topological graph and $\beta\in\mathbb{R}$. The pushforward $r_*:\mathcal{M}^\beta_{\text{quasi}}(E)\rightarrow\mathcal{M}^\beta_{\text{sub}}(E)$ induced by the range map $r:\partial E \rightarrow E^0$ is an affine bijection.
\end{proposition}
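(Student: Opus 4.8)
The plan is to assemble this proposition directly from the three preceding lemmas, since all the substantive work has already been done. By \cref{lemma:range induces quasi to sub} the map $r_*$ is well-defined as a map $\mathcal{M}^\beta_{\text{quasi}}(E) \to \mathcal{M}^\beta_{\text{sub}}(E)$; by \cref{lemma:pushforward is injective} it is injective; and by \cref{lemma:surjectivity of pushforward} it is surjective. Combining these three facts immediately yields that $r_*$ is a bijection. Consequently, the only content left to establish is that this bijection is \emph{affine}, for which I would first observe that both the domain and codomain are genuinely convex sets so that affine combinations make sense.

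To see convexity of the domain, I would take $\nu_1,\nu_2\in\mathcal{M}^\beta_{\text{quasi}}(E)$ and $\lambda\in[0,1]$, and use linearity of the pullback (coming from the sheaf description of \cref{prop:sheaf of regular measures} and the definition of $\sigma^*$ via \cref{prop:pullback of measure}) to compute that $\sigma^*(\lambda\nu_1+(1-\lambda)\nu_2)=e^\beta(\lambda\nu_1+(1-\lambda)\nu_2)$ on $\partial E\setminus E^0$; since the convex combination of regular Borel measures is again regular, it lies in $\mathcal{M}^\beta_{\text{quasi}}(E)$. Convexity of the codomain is analogous: the operator $T=r_*s^*$ is linear, so $T(\lambda\mu_1+(1-\lambda)\mu_2)=\lambda T\mu_1+(1-\lambda)T\mu_2\leq e^\beta(\lambda\mu_1+(1-\lambda)\mu_2)$ with equality on $E^0_{\text{reg}}$, placing the combination in $\mathcal{M}^\beta_{\text{sub}}(E)$.

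With convexity in hand, affineness of $r_*$ itself is the direct computation
\begin{equation*}
    r_*\bigl(\lambda\nu_1+(1-\lambda)\nu_2\bigr)(B)=\bigl(\lambda\nu_1+(1-\lambda)\nu_2\bigr)(r^{-1}(B))=\lambda\, r_*\nu_1(B)+(1-\lambda)\,r_*\nu_2(B)
\end{equation*}
for every Borel subset $B\subset E^0$, which is immediate from the defining formula $r_*\nu(B)=\nu(r^{-1}(B))$ for the pushforward. There is no genuine obstacle remaining at this stage: the hard analytic work lives entirely in \cref{lemma:pushforward is injective} and \cref{lemma:surjectivity of pushforward}, and the present proposition is essentially a bookkeeping corollary. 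If anything, the only point worth flagging is to confirm that the linearity statements for $\sigma^*$ and $T$ used above really are available, but these follow from the sheaf-theoretic construction of the pullback established in \cref{sec:measures} together with the elementary linearity of the pushforward.
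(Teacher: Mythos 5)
Your proposal is correct and follows essentially the same route as the paper: bijectivity is obtained by combining \cref{lemma:pushforward is injective} and \cref{lemma:surjectivity of pushforward}, and affineness follows from linearity of the pushforward. Your additional verification that the domain and codomain are convex is a slightly more careful bit of bookkeeping than the paper includes, but it is not a different argument.
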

\begin{proof}$ $\newline
    The fact that the pushforward is bijective is the combination of \cref{lemma:pushforward is injective} and \cref{lemma:surjectivity of pushforward}. Linearity of the pushforward gives us that this is an affine bijection.
\end{proof}

We are now able to state and prove the main result of this paper.

\begin{theorem}\label{thm:KMS weights 1:1 sub-inv measures}$ $\newline
    Let $E$ be a second-countable topological graph, $\beta\in\mathbb{R}\setminus\{0\}$. There is an affine bijection between the following three sets: KMS$_\beta$-weights (states) for the gauge-action on $C^*(E)$, (probability measures in) $\mathcal{M}^\beta_{\text{quasi}}(E)$ and (probability measures in) $\mathcal{M}^\beta_{\text{sub}}(E)$.
\end{theorem}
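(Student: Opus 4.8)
The plan is to deduce the statement by composing the two affine bijections already established earlier in the paper. First I would invoke \cref{prop:KMS weights graph}, which (since $\beta\neq0$) provides an affine bijection between the KMS$_\beta$-weights (states) for the gauge-action on $C^*(E)$ and the (probability measures in) $\mathcal{M}^\beta_{\text{quasi}}(E)$. Then I would invoke \cref{prop:pushforward bijective}, which gives an affine bijection $r_*:\mathcal{M}^\beta_{\text{quasi}}(E)\rightarrow\mathcal{M}^\beta_{\text{sub}}(E)$ induced by the range map. Composing these two affine bijections yields a coherent system of affine bijections between all three sets in the weights case.

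It then remains only to check the parenthetical states/probability-measure versions, that is, that $r_*$ restricts to a bijection between the probability measures of $\mathcal{M}^\beta_{\text{quasi}}(E)$ and those of $\mathcal{M}^\beta_{\text{sub}}(E)$. This is immediate from the fact that the pushforward preserves total mass: for any $\nu\in\mathcal{M}^\beta_{\text{quasi}}(E)$ we have $r_*\nu(E^0) = \nu(r^{-1}(E^0)) = \nu(\partial E)$, since the range map $r:\partial E\rightarrow E^0$ is defined on all of $\partial E$ and hence $r^{-1}(E^0) = \partial E$. Thus $\nu$ is a probability measure precisely when $r_*\nu$ is, and the affine bijection of \cref{prop:pushforward bijective} carries probability measures to probability measures. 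Together with the state-level half of \cref{prop:KMS weights graph}, this restricts the three-way correspondence to states.

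I do not anticipate a genuine obstacle here, since the substantive content lives entirely in \cref{prop:KMS weights graph} and \cref{prop:pushforward bijective}. The only thing demanding care is the consistent bookkeeping of the three parenthetical variants, so that the weight-level and state-level correspondences are compatible under composition; the mass-preservation identity above is exactly what makes them compatible.
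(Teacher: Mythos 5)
Your proposal is correct and follows essentially the same route as the paper: compose the affine bijections of \cref{prop:KMS weights graph} and \cref{prop:pushforward bijective}, then note that $r_*\nu(E^0)=\nu(r^{-1}(E^0))=\nu(\partial E)$ so that the correspondence restricts to probability measures and hence to states. Your observation that $r^{-1}(E^0)=\partial E$ simply because $r$ is everywhere defined is, if anything, a slightly cleaner justification than the paper's appeal to surjectivity.
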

\begin{proof}$ $\newline
    The first bijection follows by \cref{prop:KMS weights graph} and the second bijection follows by \cref{prop:pushforward bijective}. For the KMS$_\beta$-\emph{states} case, we only need to check that the pushforward of a probability measure $\nu \in \mathcal{M}^\beta_{\text{quasi}}(E)$ is a probability measure. Indeed, by surjectivity of the range map, we get that
    \begin{equation*}
        r_*\nu(E^0) = \nu(r^{-1}(E^0)) = \nu(\partial E) = 1. \qedhere
    \end{equation*}
\end{proof}

It is worth noting that in the case where $E$ is a countable discrete graph, \cref{thm:KMS weights 1:1 sub-inv measures} specializes to a Theorem by Thomsen, c.f.~\cite{Thomsen2014}*{Theorem 4.6}.

\begin{example}\label{example:no KMS states but KMS weights}$ $\newline
    Consider the doubly infinite discrete graph
    \begin{equation*}
        \xymatrix{
            \cdots 
            & \bullet_{v_{-2}} \ar[l]^{e_{-2}}
            & \bullet_{v_{-1}} \ar[l]^{e_{-1}}
            & \bullet_{v_{0}} \ar[l]^{e_{0}}
            & \bullet_{v_{1}} \ar[l]^{e_{1}}
            & \bullet_{v_{2}} \ar[l]^{e_{2}}
            & \cdots \ar[l]^{e_{3}}
        }
    \end{equation*}
    Any regular Borel measure on $E^0 = \{v_n\}_{n\in\mathbb{Z}}$ is of the form
    \begin{equation*}
        \sum_{n\in\mathbb{Z}} a_n \delta_{v_n}
    \end{equation*}
    with $0 \leq a_n < \infty$. Note also that $E^0 = E^0_{\text{reg}}$. Thus, for $\beta \in \mathbb{R}$ we are interested in the measures $\mu$ on $E^0$ such that $T\mu = e^{\beta}\mu$. It should be clear that $T\delta_{v_n} = \delta_{v_{n-1}}$, hence we are interested in the measures on $E^0$ such that
    \begin{equation*}
        \sum_{n\in\mathbb{Z}} a_n \delta_{v_{n-1}} 
        = \sum_{n\in\mathbb{Z}} a_n e^{\beta} \delta_{v_n}.
    \end{equation*}
    This is the case if and only if $a_{n} = a_{n-1}e^{\beta}$. Thus,
    \begin{equation*}
        \mathcal{M}_{\text{sub}}^{\beta}(E) 
        = \left\{ a \sum_{n\in\mathbb{Z}} e^{n\beta} \delta_{v_n} \ | \  0 \leq a < \infty \right\}.
    \end{equation*}
    It should be clear that there are no values of $a$ or $\beta$ that make
    \begin{equation*}
        a \sum_{n\in\mathbb{Z}} e^{n\beta} \delta_{v_n}
    \end{equation*}
    a probability measure. Hence, there are no KMS$_\beta$-states for the gauge action on the $C^*$-algebra of this graph, but there is this one-dimensional ray of KMS$_\beta$-weights for the gauge action for any value of $\beta \neq 0$, by \cref{thm:KMS weights 1:1 sub-inv measures}.
\end{example}

Note that the conclusion in the following example was reached by Thomsen using different methods in the case where $Z$ is a compact metric space, c.f.~\cite{Thomsen2011}*{Example 6.14}.

\begin{example}\label{example:homeomorphism algebra}$ $\newline
    Let $Z$ be a second-countable locally compact Hausdorff space, and suppose that $h:Z\rightarrow Z$ is a surjective local homeomorphism such that $\#h^{-1}(z) = n$ for all $z\in Z$. Note that this identifies $Z$ as an $n$-covering space of $Z$. We get a second-countable topological graph by setting the vertex and edge spaces equal to $Z$, the range map is set to be the identity on $Z$ and the source map is set to $h$, i.e. we consider the topological graph $E = (Z,Z,id_Z,h)$. In this case we have that
    \begin{equation*}
        \mathcal{M}_{\text{sub}}^{\beta}(E)
        = \{ \mu \in \mathcal{M}_{\text{reg}}(Z) \ | \ h^*\mu = e^\beta\mu \}.
    \end{equation*}
    We will show that there can only exist KMS$_\beta$-states for the gauge-action on the associated graph $C^*$-algebra if $\beta = \ln(n)$. By \cref{thm:KMS weights 1:1 sub-inv measures} this is equivalent to showing that the pullback $h^*:\mathcal{M}_{\text{reg}}(Z) \rightarrow \mathcal{M}_{\text{reg}}(Z)$ only has one possible eigenvalue, namely $n$, when viewed as a linear map between real vector spaces (after taking the group completion of $\mathcal{M}_{\text{reg}}(Z)$).

    Since $Z$ is an $n$-covering space of $Z$ we get that for every $z\in Z$ there exists an open neighborhood $U_z$ about $z$ such that $h^{-1}(U_z) = \bigcup_{i=1}^{n}V_{z,i}$ with the $V_{z,i}$'s being mutually disjoint and $h|_{V_{z,i}}$ being injective. Then $\mathcal{U} = \{U_z\}_{z \in Z}$, with each $U_z$ as above, is an open cover of $Z$. By second-countability of $Z$ we have that $\mathcal{U}$ admits a countable subcover, denote it by $\{U_k\}_{k\in\mathbb{N}}$. For each $k\in\mathbb{N}$ let $V_{k,1},\dots,V_{k,n}$ be mutually disjoint open subsets of $Z$ such that $h^{-1}(U_k) = \bigcup_{i=1}^{n}V_{k,i}$ and $h|_{V_{k,i}}$ is injective for each $1 \leq i \leq n$. Let us now consider the collection
    \begin{equation*}
        \mathcal{V} = \{ V_{k,1},\dots,V_{k,n} \}_{k\in\mathbb{N}}.
    \end{equation*}

    \emph{Claim 1:} This is an open cover of $Z$. Indeed, let $z \in Z$ and choose $k\in\mathbb{N}$ such that $h(z) \in U_k$. Then $z \in h^{-1}(U_k) = \bigcup_{i=1}^{n}V_{k,i}$, hence $z \in V_{k,i}$ for some $1 \leq i \leq n$.

    \emph{Claim 2:} $h\left(\bigcup_{k\in\mathbb{N}} V_{k,j}\right) = Z$ for any $1 \leq j \leq n$. Indeed, let $z \in Z$ and choose $k\in\mathbb{N}$ such that $z \in U_k$. Then, $h^{-1}(U_k) = \bigcup_{i=1}^{n}V_{k,i}$ and, in particular, $h|_{V_{k,j}}:V_{k,j} \rightarrow U_k$ is a homeomorphism. Thus, there exists a $w \in V_{k,j}$ such that $h(w) = z$, hence $z \in h\left(\bigcup_{k\in\mathbb{N}} V_{k,j}\right)$.

    With these claims in place, suppose we have a probability measure $\mu \in \mathcal{M}_{\text{reg}}(Z)$ satisfying $h^*\mu = e^\beta \mu$. Partition $Z$ by $\mathcal{V}$, i.e. write $Z$ as a union of mutually disjoint sets $Z_{k,i}$ where each $Z_{k,i} \subset V_{k,i}$ and $\bigcup_{k\in\mathbb{N}}Z_{k,i} = \bigcup_{k\in\mathbb{N}}V_{k,i}$. By \cref{prop:pullback of measure} we have that
    \begin{equation*}
        h^*\mu (Z)
        = \sum_{i = 1}^{n} \sum_{k\in\mathbb{N}} \mu(h(Z_{k,i}))
        = \sum_{i = 1}^{n} \mu\left( h \left( \bigcup_{k\in\mathbb{N}} Z_{k,i} \right) \right)
        = \sum_{i = 1}^{n} \mu(Z)
        = n\mu(Z)
        = e^\beta \mu(Z).
    \end{equation*}
    Since $\mu(Z) = 1$ we get that $\beta = \ln(n)$.
\end{example}

It is worth noting that situations similar to this example has been studied before in many contexts. For example, Exel and Lopes in \cites{Exel2004} study the situation where $T:X\rightarrow X$ is a local homeomorphism on a compact metric space. Afsar, an Huef and Raeburn, in \cites{Afsar2014} studied the situation where $h:Z\rightarrow Z$ is a surjective local homeomorphism on a compact Hausdorff space.

\subsection*{Final remarks}

\cref{thm:KMS weights 1:1 sub-inv measures} attains a description of the KMS$_\beta$-weights for the gauge-action on the $C^*$-algebra of a second-countable topological graph. It does not however, get us any closer to the existence problem of these KMS$_\beta$-weights. A possible direction of further investigation would be to attempt to get a description of the set $\mathcal{M}^{\beta}_{\text{sub}}(E)$, in complete generality (ambitious) or for more concrete classes of second-countable topological graphs.

\begin{bibdiv}
\begin{biblist}

\bib{Afsar2014}{misc}{
    author = {Afsar, Zahra},
    author = {an~Huef, Astrid},
    author = {Raeburn, Iain},
    title = {KMS states on C*-algebras associated to local homeomorphisms},
    how = {preprint},
    date = {2014},
    eprint = {\href{https://arxiv.org/abs/1402.5712}{\texttt{1402.5712 [math.OA]}}},
}

\bib{Baladi2000}{book}{
    author = {Baladi, Viviane},
    title = {Positive transfer operators and decay of correlations},
    series = {Advanced series in nonlinear dynamics},
    volume = {16},
    publisher = {World Scientific},
    date = {2000},
    isbn = {9810233280},
    address = {Singapore}
}

\bib{Bratteli1987}{book}{
    author = {Bratteli, Ola},
    author = {Robinson, Derek},
    volume = {1},
    publisher = {Springer},
    isbn = {0387170936},
    year = {1987},
    title = {Operator algebras and quantum statistical mechanics : 1 : C*- and W*-algebras, symmetry groups, decomposition of states},
    edition = {2nd ed.},
    language = {eng},
    address = {Berlin},
}

\bib{Christensen2022}{misc}{
    author = {Christensen, Johannes},
    title = {Tracial weights on topological graph algebras},
    how = {preprint},
    year = {2022},
    eprint = {\href{https://arxiv.org/abs/2208.12122}{\texttt{2208.12122 [math.OA]}}}
}

\bib{Christensen2023}{article}{
    author = {Christensen, Johannes},
    year = {2023},
    pages = {},
    title = {The structure of KMS weights on étale groupoid C*-algebras},
    journal = {Journal of Noncommutative Geometry},
    doi = {10.4171/JNCG/507}
}

\bib{Cohn2013}{book}{
    publisher = {Springer New York : Imprint: Birkhäuser},
    isbn = {1-4614-6956-2},
    year = {2013},
    title = {Measure Theory : Second Edition},
    edition = {2nd ed. 2013.},
    language = {eng},
    address = {New York, NY},
    author = {Cohn, Donald L},
}

\bib{Combes1971}{article}{
    author = {Combes, F.},
    title = {Poids associ\'e \`a une alg\`ebre hilbertienne \`a gauche},
    journal = {Compositio Mathematica},
    pages = {49--77},
    publisher = {Wolters-Noordhoff Publishing},
    volume = {23},
    number = {1},
    year = {1971},
    language = {fr},
}

\bib{Exel2004}{article}{ 
    title = {C*-algebras, approximately proper equivalence relations and thermodynamic formalism}, 
    volume = {24}, 
    DOI = {10.1017/S0143385704000148}, 
    number = {4}, 
    journal = {Ergodic Theory and Dynamical Systems}, 
    publisher = {Cambridge University Press}, 
    author = {Exel, R.},
    author = {Lopes, A.}, 
    year = {2004}, 
    pages = {1051--1082}
}

\bib{Hartshorne1977}{book}{
    volume = {52},
    publisher = {Springer},
    isbn = {3540902449},
    year = {1977},
    title = {Algebraic geometry},
    language = {eng},
    address = {New York},
    author = {Hartshorne, Robin},
    series = {Graduate texts in mathematics},
}

\bib{Katsura2003}{article}{
    ISSN = {00029947},
    author = {Katsura, Takeshi},
    journal = {Transactions of the American Mathematical Society},
    number = {11},
    pages = {4287--4322},
    publisher = {American Mathematical Society},
    title = {A Class of C*-Algebras Generalizing Both Graph Algebras and Homeomorphism C*-Algebras I, Fundamental Results},
    volume = {356},
    year = {2004}
}
\bib{Kumjian2017}{misc}{
    title = {Twisted Topological Graph Algebras Are Twisted Groupoid C*-algebras}, 
    author = {Kumjian, Alex},
    author = {Li, Hui},
    year = {2017},
    how = {preprint},
    eprint = {\href{https://arxiv.org/abs/1507.04449}{\texttt{1507.04449 [math.OA]}}},
}

\bib{Kustermans1997}{misc}{
    author = {Kustermans, Johan},
    title = {KMS-weights on C*-algebras},
    how = {preprint},
    year = {1997},
    eprint = {\href{https://arxiv.org/abs/funct-an/9704008}{\texttt{9704008 [math.FA]}}}
}

\bib{Neshveyev2013}{article}{
    ISSN = {03794024, 18417744},
    URL = {https://www.jstor.org/stable/26432248},
    author = {Neshveyev, Sergey},
    journal = {Journal of Operator Theory},
    number = {2},
    pages = {513--530},
    publisher = {Theta Foundation},
    title = {KMS states on the C*-algebras of non-principal groupoids},
    volume = {70},
    year = {2013}
}

\bib{Pedersen1979}{book}{
    volume = {14},
    publisher = {Academic Press},
    isbn = {0125494505},
    year = {1979},
    title = {C*-algebras and their automorphism groups},
    language = {eng},
    address = {London},
    author = {Pedersen, Gert Kjærgård},
    series = {L.M.S. monographs},
}

\bib{Renault1980}{book}{
    volume = {793},
    publisher = {Springer-Verlag},
    isbn = {3540099778},
    year = {1980},
    title = {A groupoid approach to C*-algebras},
    language = {und},
    address = {Berlin},
    author = {Renault, Jean},
    series = {Lecture notes in mathematics},
}

\bib{Schafhauser2018}{article}{
    title = {Traces on topological-graph algebras}, 
    volume = {38}, 
    DOI = {10.1017/etds.2016.114}, 
    number = {5}, 
    journal = {Ergodic Theory and Dynamical Systems}, 
    publisher = {Cambridge University Press}, 
    author = {Schafhauser, Christopher},
    year = {2018},
    pages = {1923-1953}
}

\bib{Thomsen2011}{article}{
    title = {On the C*-Algebra of a Locally Injective Surjection and its KMS States},
    journal = {Communications in Mathematical Physics},
    volume = {302},
    number = {2},
    pages = {403-423},
    year = {2011},
    issn = {1432-0916},
    doi = {https://doi.org/10.1007/s00220-010-1168-7},
    url = {https://doi.org/10.1007/s00220-010-1168-7},
    author = {Thomsen, Klaus},
}

\bib{Thomsen2014}{article}{
    title = {KMS weights on groupoid and graph C*-algebras},
    journal = {Journal of Functional Analysis},
    volume = {266},
    number = {5},
    pages = {2959-2988},
    year = {2014},
    issn = {0022-1236},
    doi = {https://doi.org/10.1016/j.jfa.2013.10.008},
    url = {https://www.sciencedirect.com/science/article/pii/S0022123613003984},
    author = {Thomsen, Klaus},
}

\bib{Yeend2006}{article}{
    URL = {https://doi.org/10.1090/conm/414/07812},
    author = {Yeend, Trent},
    journal = {Operator theory, operator algebras, and applications},
    series = {Contemp. Math.},
    pages = {231--244},
    publisher = {Amer. Math. Soc., Providence, RI},
    title = {Topological higher-rank graphs and the {$C^*$}-algebras of
    topological 1-graphs},
    volume = {414},
    year = {2006}
}

\bib{Yeend2007}{article}{
    ISSN = {03794024, 18417744},
    URL = {http://www.jstor.org/stable/24715760},
    author = {Yeend, Trent},
    journal = {Journal of Operator Theory},
    number = {1},
    pages = {95--120},
    publisher = {Theta Foundation},
    title = {Groupoid models for the C*-algebras of topological higher-rank graphs},
    volume = {57},
    year = {2007}
}

\end{biblist}
\end{bibdiv}
    
\end{document}